%% file: main.tex
\documentclass[a4paper, nonatbib, 3p, twocolumn]{elsarticle}
\input{header}

\DeclareUnicodeCharacter{2009}{~}

\begin{document}
\begin{frontmatter}
\title{Koopman-based stability analysis of differential-algebraic equations with applications to frictional multibody systems}
\author[unistutt]{Alexander Schütz}
\ead{schuetz@inm.uni-stuttgart.de}
\author[unistutt]{Fabia Bayer\corref{cor1}}
\ead{bayer@inm.uni-stuttgart.de}
\author[unistutt]{Remco I. Leine}
\ead{leine@inm.uni-stuttgart.de}
\affiliation[unistutt]{organization={University of Stuttgart, Institute for Nonlinear Mechanics},addressline={Pfaffenwaldring 9},city={70569 Stuttgart},country={Germany}}
\cortext[cor1]{Corresponding author}

\begin{abstract}
    Periodic solutions of differential-algebraic equations (DAEs) and ordinary differential equations (ODEs) can be determined using the harmonic balance method (HBM), which is a frequency-domain approach that approximates the solution by its truncated Fourier series. The Koopman-Hill method, a method to determine the stability of periodic solutions found by HBM, and originally developed for ODEs, is generalized to DAEs in this work. Analogously to the ODE case, the core idea of the proposed Koopman-Hill method for DAEs is to establish a linear time-invariant but high-dimensional DAE which approximately governs the dynamics of small admissible perturbations around the periodic solution. The crucial difference to the ODE case is the fact that the evolution of this linear time-invariant DAE is not simply given by a matrix exponential, but by a more complicated expression involving a Drazin inverse, rendering the resulting monodromy matrix singular. Still, even in the DAE case, this novel relationship between the monodromy matrix and the Hill matrix is essentially given by one single formula, which is the main result of this work. 
    Two academic mechanical systems, a mathematical pendulum formulated as an index-3 DAE and a nonsmooth frictional two-mass oscillator with switching index, demonstrate the applicability of the proposed method and its blindness to the DAE's index.

\end{abstract}

\begin{keyword}
    Coulomb friction \sep Floquet theory \sep Harmonic Balance Method \sep Koopman theory \sep multibody systems \sep nonsmooth dynamics \sep stability analysis
\end{keyword}
\end{frontmatter}

\section{Introduction}\label{ch:Introduction}
In this work, we propose a novel method for the stability analysis of periodic solutions of nonlinear differential algebraic equations. In essence, the derived approach comprises the computation of the monodromy matrix for such periodic solutions by solving a lifted linear time-invariant differential-algebraic equation (DAE) that naturally arises as a by-product of the harmonic balance method.

DAEs occur in many engineering and scientific applications. Examples include bilateral constraints in multibody dynamics \cite{Roberson1988}, incompressible fluid flow~\cite{Harlow1965, Montlaur2011} and electrical circuits with Kirchhoff constraints~\cite{Gunther1995, Scholz2018}. Closed-form solutions of DAEs are available only in special cases \cite{Campbell1976,Trenn2013}. Consequently, much research has been dedicated to the numerical integration of DAEs \cite{Brenan1995,Gear1985,Anantharaman1991,Trenn2013}. These time-domain integrators usually require index reduction to guarantee accuracy and stability.

In the area of nonsmooth dynamics~\cite{Leine2008,Acary2008}, DAEs result naturally when implicit constitutive laws are present, e.g., when modeling set-valued frictional contact laws in nonsmooth mechanics \cite{Glocker2001}. 
Herein, the proximal point function \cite{Leine2008} plays a crucial role as it allows to transform set-valued normal cone inclusions into implicit algebraic equations. Using this methodology, mechanical systems with Coulomb friction, originally formulated as set-valued differential inclusions, may be reformulated as DAEs. However, the switching nature of the set-valued Coulomb friction law causes the index of the DAE to change at slip-stick and stick-slip transitions. This renders standard time-domain integration methods using index-reduction inapplicable and gives rise to dedicated integrators such as the Moreau timestepping scheme~\cite{Acary2008} or the nonsmooth generalized-alpha methods~\cite{Capobianco2021}.

The harmonic balance method  (HBM) is a widely-used frequency-domain method to identify periodic solutions of ordinary differential equations \cite{Krack2019} with applications, for instance, in turbomachinery and rotordynamics~\cite{VonGroll2001}. The HBM does not rely on numerical time-integration. It is usually implemented using an alternating frequency and time approach, where nonlinearities are evaluated in the time domain and fast-Fourier-transformed to the frequency domain, where the Fourier coefficients of the ODE are balanced in a Galerkin procedure.

For ODEs, a broad variety of stability analysis tools for periodic solutions is available \cite{LaSalle1968, Verhulst2006, Peletan2013}. A classical frequency-domain approach for the stability analysis, which is especially suited for HBM contexts, is the Hill eigenvalue problem\cite{Lazarus2010,Zhou2004}. It hinges on the complete eigenvalue decomposition of the Jacobian matrix of the HBM residual, which is known as the truncated Hill matrix. Despite being widely used, a major drawback of this method is the necessity of a sorting algorithm to remove unphysical, ``spurious'' eigenvalues \cite{Wu2022}. The recently proposed Koopman-Hill method for ODEs~\cite{Bayer2023,Bayer2024a,Bayer2025} replaces this eigenvalue decomposition by the direct computation of the monodromy matrix through a matrix exponential. This matrix exponential arises from a re-interpretation of the Hill matrix as the system matrix of a high-dimensional linear time-invariant (LTI) ODE.

The HBM can, in principle, also be applied to DAEs. While HBM application to electrical circuits has been well established~\cite{Gilmore1991}, the HBM has been applied to mechanical DAEs only more recently~\cite{Ju2021, Cochelin2009}. As the method does not require an index reduction, it is particularly suitable for the analysis of DAEs of high or switching index such as those arising in frictional mechanical multibody systems, as recently exemplified by a nonsmooth frictional oscillator~\cite{Legrand2024,Hashemi2025}. While the HBM as presented in~\cite{Legrand2024} allows for the identification of periodic solutions in frictional multibody systems, a frequency-based stability method for DAEs, which complements the HBM for DAEs, is missing. In contrast to the rich theory developed for ODEs, stability analysis tools for periodic solutions in DAEs are only available in the time domain, restricted to DAEs with constant index 1 or 2~\cite{Lamour1998, Lamour2003}. Therefore, the motivation of the present work is to supplement the results of~\cite{Legrand2024} with an accurate HBM-based stability analysis tool for DAEs of arbitrary index.

The central contribution of the present work is a novel approach for the stability analysis of periodic solutions of DAEs, applicable in conjunction with HBM. To this end, we generalize the Koopman-Hill method~of \cite{Bayer2023,Bayer2024a,Bayer2025} to DAEs. While the core idea of the proposed Koopman-Hill method for DAEs is analogous to the ODE case, the crucial difference is the treatment of the resulting linear time-invariant DAE, which is not simply solved by a matrix exponential. Instead, the solution involves a more complicated expression with a Drazin inverse, which renders the resulting monodromy matrix singular and can be numerically challenging. Despite being slightly more involved than in the ODE case, the Koopman-Hill relationship between the monodromy matrix and the Hill matrix of a DAE is still given by one single formula~\eqref{eq:abstract:mainFormula}, which is the main result of this work. Notably, the proposed formula is independent of the DAE's index, which makes it particularly suitable for the analysis of frictional mechanical multibody systems with switching index due to Coulomb friction.

The paper is structured as follows. Section \ref{ch:Multibody} discusses the considered DAE system class and demonstrates how such DAEs naturally arise as models for mechanical multibody systems with smooth bilateral or frictional constraints.  
Section \ref{ch:HBM} provides a brief overview over the HBM as a frequency-domain approach for the identification of periodic solutions in ODEs and DAEs. Our main contribution, the generalization of the Koopman-Hill method to DAEs, is presented in Section \ref{ch:KHDAE}. After highlighting relevant aspects of Floquet theory for DAEs, the linear time-invariant Hill DAE is derived in analogy to the ODE case. With the Hill DAE at hand, solution properties of linear time-invariant DAEs are used to derive the monodromy matrix through the Koopman-Hill equation~\eqref{eq:abstract:mainFormula}. 

The accuracy and the numerical properties of the proposed method are analyzed in Section \ref{ch:Examples} for two example systems whose dynamics can be formulated as a DAE: a nonlinear mathematical pendulum formulated as a point mass with a bilateral position constraint and a nonsmooth frictional oscillator. For the pendulum example, the results obtained by the proposed Koopman-Hill method for DAEs are verified against the results obtained from an equivalent ODE formulation of the system. For the frictional oscillator, the results of the Koopman-Hill method applied to the nonsmooth DAE as well as to a regularized ODE formulation are compared to a highly accurate time-domain Floquet analysis. Concluding remarks are made in Section~\ref{ch:Conclusion}.

\section{DAEs modeling frictional multibody systems}\label{ch:Multibody}

Mechanical multibody systems, derived for example using Lagrange's formalism or the projected Newton-Euler equations, are usually given in a structured second-order form~\cite{GarciadeJalon1994}.
As a starting point, consider a constrained mechanical multibody system described by the DAE

\begin{subequations}\label{eq:Multibody:Lagrange1}
\begin{align}
    \vM\ddot{\vq}-\vh(t,\vq, \dot{\vq}) &= \vW(t, \vq)\vla  \label{eq:Multibody:Lagrange1:ode} \\
    \vzero &= \vg(t, \vq, \dot{\vq}, \vla) \;.
\end{align}
\end{subequations}
Here $\vq(t) \in \Rspace^f$ denotes generalized coordinates and $\dot{\vq}(t) \in \Rspace^f$ the corresponding generalized velocities.
The vector $\vh$ collects internal and external forces. This includes potential and non-potential contributions such as gravity, gyroscopic terms, damping, and parametric or external excitation. 
The positive definite mass matrix $\vM$ is assumed here to be constant. Systems with a state-dependent mass matrix can be written in the form~\eqref{eq:Multibody:Lagrange1} by a suitable coordinate transform or by pre-multiplying Equation~\eqref{eq:Multibody:Lagrange1:ode} with~$\vM^{-1}$.

The mechanical system~\eqref{eq:Multibody:Lagrange1} is subject to~$c$ constraint equations collected in $\vg(t, \vq, \dot{\vq}, \vla) \in \Rspace^{c}$. The corresponding constraint forces $\vla(t)\in\Rspace^c$ enter Equation~\eqref{eq:Multibody:Lagrange1:ode} as generalized forces with their directions being the columns of $\vW(\vq)$ \cite{Kane1985}. We assume these directions are linearly independent, i.e., the constraints $\vg$ are non-redundant.
Position-level constraints of the form $0=\vg_{\mathrm{pos}}(t,\vq)$ have generalized force directions
\begin{align}
    \vW_{\mathrm{pos}}(t,\vq) = \left(\pd{\vg_{\mathrm{pos}}}{\vq}\right)\T
\end{align}
 and yield an index-3 DAE~\cite{Lamour2003}.
Velocity level constraints of the form
\begin{align}
    0 = \vg_{\mathrm{vel}}(t, \vq, \dot{\vq}) = \vW_{\mathrm{vel}}(t, \vq)\T \dot{\vq} + \vch(t, \vq)
\end{align}
with generalized force directions 
\begin{align}
    \vW_{\mathrm{vel}}(t,\vq) = \left(\pd{\vg_{\mathrm{vel}}}{\dot{\vq}}\right)\T
\end{align}
lead to a DAE of index 2.
Finally, force level constraints
\begin{align}
    0 = \vg_{\mathrm{force}}(t, \vq, \dot{\vq}, \vla)
\end{align}
with nonsingular $\pd{\vg_{\mathrm{force}}}{\vla}$ lead to an index-1 DAE. 

Planar unilateral frictional contact between two bodies with relative sliding velocity $\gamma_T$ induces a scalar contact force $\lambda_N \geq 0$ in normal direction and a scalar tangential friction force $\lambda_T$ \cite{Glocker2001}.
Dry friction distinguishes sliding phases, where $\lambda_T$ admits a prescribed value, and sticking phases, where $\lambda_T$ is a reactive force.
This behavior is, for instance, imposed by the nonsmooth set-valued Coulomb force law
\begin{align}\label{eq:Multibody:Inclusionlaw}
    - \lambda_{T} \in \mu\lambda_{N}\Sgn(\gamma_{T})
\end{align}
with friction coefficient $\mu > 0$, which will be considered in this work.
The set-valued sign function $\Sgn$ admits the values $+1$ and $-1$ when $\gamma_T$ is strictly positive or negative, respectively (sliding behavior).
When the relative velocity is exactly $0$ (sticking behavior), the sign function maps to the set $[-1, 1]$ and the friction force $-\lambda_T$ may admit any value in the interval $[-\mu \lambda_N, \mu \lambda_N]$, balancing internal and external forces such that the sticking condition is retained. The set-valued Coulomb force law is shown in black in Figure~\ref{fig:mb:forcelaw}. 

\begin{figure}[hbt]
	\centering
    \includegraphics{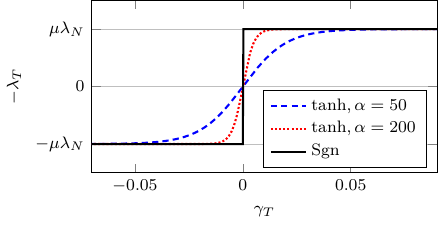}
    \caption{Comparison of nonsmooth Coulomb friction law with smoothed approximation.}
    \label{fig:mb:forcelaw}
\end{figure}

The force law~\eqref{eq:Multibody:Inclusionlaw} is formulated as an inclusion, which renders it unsuitable for being incorporated into the constraint equations of the DAE~\eqref{eq:Multibody:Lagrange1}.
A straightforward way to avoid the inclusion is to regularize the force law, for example with the approximation
\begin{align}
    \label{eq:Multibody:Inclusionlaw:smoothed}
    -\lambda_{T} = \mu\lambda_{N}\tanh(\alpha \gamma_{T}) \;,
\end{align}
where $\alpha$ is a nonphysical regularization parameter which controls the slope of the function at the origin. The regularized force law for two choices of $\alpha$ is shown in Figure~\ref{fig:mb:forcelaw}. 
The regularization procedure, no matter how steep the slope is chosen, removes qualitative properties from the solution.
For instance, a sticking phase of a frictional system can only be represented by the regularized force law as small oscillations with nonzero velocity.
In addition, large values of $\alpha$ render the considered system stiff and hard to solve numerically.

To avoid these issues of the smoothed approximation, it is possible to directly transform the inclusion~\eqref{eq:Multibody:Inclusionlaw} into an algebraic equation without loss of its nonsmooth properties by using tools from convex analysis. 
The procedure for general, more intricate inclusions, such as in the case of spatial Coulomb friction, involves the use of normal cone inclusions and the proximal point operator~\cite{Glocker2001,Leine2008}. For the sake of conciseness, we refrain here from a detailed derivation and, instead, verify the following algebraic equation using a case distinction.

The solution set of the equation
\begin{equation}\label{eq:Multibody:FrictionProx}
    \begin{aligned}
	0 = \gamma_{T} &+ \min\left(0,\rho(\lambda_{T}+\mu\lambda_{N}) - \gamma_{T}\right) \\
    &+ \max\left(0,\rho(\lambda_{T}-\mu\lambda_{N}) - \gamma_{T}\right)
    \end{aligned}
\end{equation}
with arbitrary $\rho>0$ consists exactly of the pairs $(\gamma_T, \lambda_T)$ which admit the inclusion~\eqref{eq:Multibody:Inclusionlaw}.
Generally, at most one of the $\min$ and $\max$ terms of~\eqref{eq:Multibody:FrictionProx} is nonzero at once for positive $\lambda_N$. For  positive tangential velocities $\gamma_T$, the $\min$ term must admit the value $-\gamma_T$, ensuring that the friction force satisfies $\lambda_T = -\mu \lambda_N$. Analogously, for negative tangential velocities, $\lambda_T = \mu \lambda_N$ is enforced through the $\max$ term. When the tangential velocity is exactly zero, both $\min$ and $\max$ terms vanish, imposing that $\lambda_T$ lies within the force reservoir $[-\mu \lambda_N, \mu \lambda_N]$. 

Equation~\eqref{eq:Multibody:FrictionProx} is an algebraic equation exactly representing the Coulomb friction law. Thus, for each frictional contact of a mechanical multibody system, one simply adds Equation~\eqref{eq:Multibody:FrictionProx} as an additional constraint to the DAE~\eqref{eq:Multibody:Lagrange1}.
DAEs with frictional constraints have a switching index: While the friction force is an impressed force during slipping phases, which corresponds to an index of one, a sticking phase behaves like a bilateral constraint on velocity level, yielding an index-2 DAE.

Equation~\eqref{eq:Multibody:Lagrange1}, whether frictional constraints are included or not, can be brought into first-order form by collecting its states in the state vector $\vx\T = [\vq\T, \dot{\vq}\T, \vla\T]$ of length $n = 2f + c$. The resulting DAE is of the form
\begin{align}\label{eq:Multibody:nonlinDAE}
    \vA \dot{\vx} = \vf(t, \vx) \;,
\end{align}
with the block-diagonal, constant pseudo-mass matrix
\begin{align}
    \vA = \begin{bmatrix}
    \vI & \vzero & \vzero \\ \vzero & \vM & \vzero \\ \vzero & \vzero & \vzero
    \end{bmatrix} \in \Rspace^{n\times n}\textnormal{,}
\end{align}
which is singular when constraints are present, and the right-hand side
\begin{align}
    \vf(t, \vx) = \begin{bmatrix}
    \dot{\vq} \\ 
    \vh(t, \vq, \dot{\vq}) + \vW(t, \vq) \vla\\ 
    \vg(t, \vq, \dot{\vq}, \vla) \\ 
    \end{bmatrix}\;.
\end{align}
As the mechanical mass matrix $\vM$ is invertible, the rank deficiency of the pseudo-mass matrix $\vA$ is due to the all-zero lower rows which come from the constraint equations. More quantitatively, the rank of~$\vA$ is $2f = n-c$. 

The main goal of this work is to analyze the stability properties of periodic solutions in systems of the form~\eqref{eq:Multibody:nonlinDAE}.
Hence, we are primarily interested in dynamics whose right hand side satisfies the periodicity property
\begin{equation}
    \vf(t, \vx) = \vf(t+T, \vx), \quad\mathrm{for~all~} t\in\Rspace, \forall \vx\in\Rspace^n
\end{equation}
 with period $T$.
This is, for instance, true for mechanical multibody systems with harmonic excitation.

We further assume that the solutions of~\eqref{eq:Multibody:nonlinDAE} are unique in forward time, an assumption which is justified for multibody systems with smooth bilateral constraints~\cite{Jalon1994} and Coulomb friction with constant normal force $\lambda_N$~\cite{Leine2004}. Uniqueness in backward time is, however, generally not guaranteed if nonsmooth Coulomb friction forces are present \cite{Leine2004}.

Motivated by the application to constrained frictional mechanical multibody systems, the investigation of the dynamics \eqref{eq:Multibody:nonlinDAE} with singular pseudo-mass matrix $\vA$ is of primary interest in the subsequent analysis.
However, the derived methods are generally also applicable in the case of regular $\vA$.
In particular, for the special case $\vA=\vI$, the considered system class~\eqref{eq:Multibody:nonlinDAE} includes ordinary differential equations (ODEs) in standard first-order form,
\begin{equation}\label{eq:Multibody:nonlinODE}
    \dot{\vx} = \vf(t, \vx)\;.
\end{equation}

\section{Harmonic balance for DAEs}\label{ch:HBM}
The main focus of the present work lies on the Koopman-Hill stability analysis of periodic solutions of the DAE~\eqref{eq:Multibody:nonlinDAE}, motivated by potentially frictional multibody systems.
Periodic solutions can be computed numerically through the harmonic balance method (HBM). While mainly being a tool for ODEs~\cite{Krack2019,Nayfeh1995}, the HBM can also be applied to DAEs of the form~\eqref{eq:Multibody:nonlinDAE}~\cite{Legrand2024}, as discussed in the following section.

A periodic solution $\vx^{\rp}$ with period~$T$ is a solution which solves the $T$-periodic DAE~\eqref{eq:Multibody:nonlinDAE} and  fulfills $\vx^\rp(t + T) = \vx^\rp(t)$ for all $t \geq 0$. 
By not restricting $T$ to be the minimal period of $\vx^{\rp}$ or $\vf$, the setup considered here includes, without loss of generality, subharmonic and superharmonic cases, where the ratio between the minimal periods of the solution and the system is a rational number. 
Trivially, this formulation also encompasses equilibrium points $\vx^{\rp}(t) \equiv \vx^* $ as periodic solutions with arbitrary period~$T$.

The task of finding $T$-periodic solutions of~\eqref{eq:Multibody:nonlinDAE} constitutes a boundary value problem (BVP)
as conditions are imposed on both the initial and terminal states. 
Various numerical methods exist to address this type of BVP, both in the time domain and in the frequency domain. 
Time-domain techniques such as shooting, multiple shooting, and collocation all rely on an interplay between time integration (or finite differencing) of the DAE using a numerical DAE solver and the solution of nonlinear equations for periodicity and continuity constraints~\cite{Brenan1995}. Hereby, the fact that the original dynamics is described as a DAE necessitates the use of specialized solvers. In particular, for DAEs of index higher than one, an index reduction and stabilization is usually required to ensure accurate integration results~\cite{Gear1985, Anantharaman1991}. 

In contrast, the HBM is a frequency-based method. It aims to determine a set of Fourier coefficients that approximate the periodic solution and is widely used for ODEs due to its efficiency and simplicity~\cite{Detroux2015,Kerschen2009,Krack2019,Lazarus2010,Peletan2013}. Recently, the HBM has been successfully applied to DAEs of the form~\eqref{eq:Multibody:nonlinDAE} with nonsmooth frictional constraints~\cite{Legrand2024,Hashemi2025}. This idea, which is agnostic to the DAE index and, thus, does not require index reduction, is summarized below.

The truncated Fourier analysis operator $\cF$ denotes the map from a continuous-time vector-valued $T$-periodic signal $\vx^{\rp}: [0, T) \rightarrow \Cspace^n$ with maximum bandwidth $N$ to its Fourier coefficient vector $\vX := \left({\vX_{-N}}\T, \dots, {\vX_N }\T\right)\T {\in \Cspace^{n (2N+1)}}$, which collects the Fourier coefficients up to the $N$-th harmonic. Its inverse, the Fourier synthesis operator $\cF^{-1}$,  reconstructs the signal from its Fourier coefficients via the Fourier series,
either in the complex formulation
\begin{align}\label{eq:def_xp:cplx}
	\vx^\rp(t) = \sum_{k = -N}^{N} \vX_k \ex^{\ic k \omega t} =: \cF^{-1}(\vX,t)
\end{align}
or in the equivalent real formulation
\begin{align}\label{eq:def_xp:real}
	\vx^\rp(t) = \vX_0 + \sum_{k = 1}^{N} \vX_{\rc, k} \cos(k \omega t) + \vX_{\rs, k} \sin(k \omega t)
\end{align}
with the angular frequency $\omega = \frac{2\pi}{T}$. 
Both formulations are equivalent and can be transformed into each other by a linear transformation~\cite{Bayer2024}. While the real-valued form is preferable for numerical implementation because it avoids complex arithmetic, the complex-valued form is more concise for theoretical analysis and will be used in the arguments below.

The HBM aims to numerically approximate a periodic solution of~\eqref{eq:Multibody:nonlinDAE} through a Fourier coefficient vector $\vX$ that contains the solution's Fourier coefficients.
Naturally, the $2N+1$ Fourier coefficients in~$\vX$ cannot represent all periodic functions exactly. Indeed, $\cF \cF^{-1}$ is the identity operator on $\Cspace^{n(2N+1)}$, while $\cF^{-1} \cF$ is an orthogonal projection from the infinite-dimensional space of $T$-periodic vector-valued functions onto the (vector-valued) span of the finitely many basis functions $\ex^{-\ic N \omega t}, \dots, \ex^{\ic N \omega t}$.
Nonetheless, the a priori unknown periodic solution $\vx^{\rp}(t)$ can be approximately parameterized by finitely many Fourier coefficients $\vX_{-N}, \dots, \vX_{N}$,
constrained by the dynamics
\begin{align}\label{eq:background:residu}
	\vr(\vX, t) = \vf(t, \cF^{-1}(\vX, t)) - \vA \td{}{t} \left(\cF ^{-1}(\vX, t)\right) \overset{!}{=}\vzero \;.
\end{align}
Ideally, if $\vX$ would describe the periodic solution exactly, then the residual $\vr$ would vanish at every time instant. However, as the periodic solution is generally only approximately described by $\vX$ due to the finite truncation, this will not be the case. Therefore, in the HBM, only the first $N$ harmonics of the periodic function~$\vr$ are required to vanish in a Galerkin procedure. This procedure
yields the $n(2N+1)$ algebraic equations
\begin{equation}
\label{eq:hbm:Rofx}
\vR(\vX) = \cF \vf\left(t, \cF^{-1}(\vX, t)\right) - \omega (\vD \otimes \vA) \vX = \vzero\;,
\end{equation}
where $\vD := \diag(-\ic N, \dots, \ic N) \otimes \vI$ is the differentiation operator. 
Equation~\eqref{eq:hbm:Rofx} is referred to as the harmonic balance equations.
The harmonic balance equations are often equivalently introduced with a negated sign, where the terms related to~$\vf$ are subtracted from the $\omega$ term. However, when stability analysis is desired, it is beneficial to subtract the $\omega$ term, as will become clear in Section~\ref{ch:KHDAE}. Since no regularity assumptions on $\vA$ are made here, Equation~\eqref{eq:hbm:Rofx} is applicable to DAEs, where $\vA$ is singular, and also to ODEs in standard first-order form, where $\vA = \vI$. For the latter case,
existence and convergence
of the HBM solutions to isolated limit cycles in ODEs have been shown~\cite{Urabe1965}. 

For very simple systems, the map between Fourier coefficients $\vX$ and the corresponding nonlinearity $\cF \vf\left(t, \cF^{-1}(\vX, t)\right)$ can be determined in closed form.
To treat more general classes of nonlinearities, numerical applications of the HBM usually employ a Newton-type iteration procedure to solve Equation~\eqref{eq:hbm:Rofx}.
Given an iterate $\vX$, the residual is determined by first computing the corresponding periodic signal at $L$ sample points using inverse fast Fourier transform (iFFT).
At these sample points, the nonlinearity is evaluated  and then transformed back to the frequency domain using fast Fourier transform (FFT). This method is called the alternating frequency and time (AFT) method~\cite{Cameron1989}. If the number of samples during FFT is too low compared to the bandwith of the nonlinearity, the residual becomes inaccurate due to aliasing effects. For this reason, a weighted-residual approach, where the Fourier coefficients are not computed collectively using FFT but rather by evaluating the Fourier analysis integral of the inner product of the nonlinear term with the corresponding basis function individually for every frequency, was proposed for nonsmooth nonlinearities~\cite{Legrand2024,Hashemi2025}. Due to the significant computational load of the weighted residual method, the AFT is used in all examples below.

 A straightforward computation analogous to the one presented in~\cite{Bayer2024} shows that the derivative of the HBM residual~\eqref{eq:hbm:Rofx}, which is required for the numerical solution of the HBM using gradient-based methods, is
 \begin{align}\label{eq:hillhbm:dRdX:matrix}
   \pd{\vR}{\vX} = \begin{bmatrix}
        \vJ_0 + \ic N \omega \vA & \vJ_{-1} & \dots & \vJ_{-2N}	\\	
		\vdots & & & \vdots\\
		\vJ_{2N} &  \vJ_{2N-1} & \dots & \vJ_0 - \ic N \omega \vA
    \end{bmatrix}.
\end{align}
In the ODE case, i.e., $\vA = \vI$, Equation~\eqref{eq:hillhbm:dRdX:matrix} becomes the well-known truncated Hill matrix~\cite{Lazarus2010}, which can be used for the purpose of stability computation, either through its spectrum or through a matrix exponential via the Koopman-Hill method. The following section generalizes the Koopman-Hill method to DAE cases where $\vA$ may be singular.

\section{Koopman-Hill method for DAEs}\label{ch:KHDAE}

This section presents our main contribution,
the generalization of the Koopman-Hill method for the stability analysis of periodic solutions to the case of non-autonomous differential-algebraic systems of the form~\eqref{eq:Multibody:nonlinDAE}, where $\vA \in \Rspace^{n\times n}$ may be singular and~$\vf:\Rspace\times\Rspace^n \rightarrow \Rspace^n$ is $T$-periodic in its first argument.
This system class includes frictional mechanical multibody systems as shown in Section~\ref{ch:Multibody}.

The Koopman-Hill method for ODEs is an efficient and accurate method to determine the monodromy matrix of a periodic solution through a matrix exponential of the HBM Jacobian~\eqref{eq:hillhbm:dRdX:matrix}, which is also known as the Hill matrix~\cite{Bayer2023,Bayer2024,Bayer2025}.
Below, we show that a similar relationship between the monodromy matrix and the Hill matrix holds for the DAE case, but the matrix exponential is replaced by a generalized fundamental solution matrix, whose determination involves a Drazin inverse.
In essence, the subsequent section presents the derivation of the formula \eqref{eq:abstract:mainFormula} for the monodromy matrix of periodic solutions in DAEs by following the Koopman-based argument of~\cite{Bayer2023} and generalizing where necessary.

\subsection{Floquet theory for ODEs}\label{sec:floquet}

Having determined a periodic solution $\vx^\rp$ through the harmonic balance method, one can examine its stability properties by slightly perturbing the solution.
To this end, a standard procedure for ODEs of the form \eqref{eq:Multibody:nonlinODE} is to linearize the dynamics around the periodic solution and analyze the stability of the origin of the resulting linear time-periodic (LTP) system
\begin{equation}\label{eq:KHDAE:LTPODE}
    \dy = \vJ(t)\vy\textnormal{,}\quad \vJ(t+T) = \vJ(t):= \left.\pd{\vf}{\vx}\right|_{\vx(t)=\vx^\rp(t)}
\end{equation}
 with period $T = \frac{2\pi}{\omega}$.

Due to linearity, there exists a fundamental solution matrix $\vPh(t_2, t_1)$, which maps solution trajectories $\vy(\cdot)$ of~\eqref{eq:KHDAE:LTPODE} along the flow, i.e., $\vy(t_2) = \vPh(t_2, t_1)\vy(t_1)$ for all $\vy(t_1) \in \Rspace^n$~\cite{Nayfeh1995} and $t_1, t_2 \in \Rspace$.
For smooth ODEs, by Liouville's formula \cite{Teschl2012}, the fundamental solution matrix is invertible, which guarantees uniqueness of solutions in both forward and backward time. In particular, the properties 
\begin{subequations}\label{eq:KHDAE:Phiproperties}
\begin{align}
    \vPh(t_1, t_1) &= \vI \\
    \vPh(t_1, t_2) &= \vPh(t_2, t_1)^{-1}
\end{align}
\end{subequations}
hold for all $t_1, t_2 \in \Rspace$.

The stability of the LTP ODE~\eqref{eq:KHDAE:LTPODE} is governed by its Floquet multipliers $\lambda_l\in\Cspace$, $l = 1, \dots, n$.
These are defined to be the eigenvalues of the monodromy matrix $\mo:=\vPh(T, 0)$, which forwards arbitrary initial conditions $\vy_0\in\Rspace^n$ by one period. The Floquet multipliers are nonzero due to the invertibility of the fundamental solution matrix.
If all Floquet multipliers are located inside the unit circle, i.e. 
\begin{equation}\label{eq:KHDAE:MultCondition}
\vert\lambda_l\vert < 1 \textnormal{ for all } l = 1, \dots, n, 
\end{equation} 
then arbitrary initial perturbations decrease from one period to the next. This ensures that the linear time-periodic dynamics~\eqref{eq:KHDAE:LTPODE} (as well as the corresponding periodic solution $\vx^\rp$) are asymptotically stable~\cite{Teschl2012}.

In the semi-simple case, Floquet's Theorem~\cite{Teschl2012} guarantees a special set of $n$ linearly independent solutions which span the solution space of the LTP ODE~\eqref{eq:KHDAE:LTPODE}.
These so-called Floquet form solutions
\begin{equation}\label{eq:KHDAE:FloquetFormSolution}
    \vy_l(t)=\vp_l(t)\ex^{\alpha_l t}\;,
\end{equation}
$l = 1, \dots, n$, consist of a vector-valued, bounded, $T$-periodic trajectory $\vp_l(t) \in \Cspace^n$ and a scalar exponential factor with Floquet exponent $\alpha_l \in \Cspace$, which is related to an associated Floquet multiplier via
\begin{equation}\label{eq:KHDAE:MultExp}
    \lambda_l = \ex^{\alpha_lT}\textnormal{.}
\end{equation}
If the condition
\begin{equation}\label{eq:KHDAE:ExpCondition}
\mathrm{Re}(\alpha_l) < 0 \textnormal{ for all } l = 1, \dots, n, 
\end{equation}
holds, arbitrary trajectories of the LTP ODE \eqref{eq:KHDAE:LTPODE} (which are linear combinations of the Floquet form solutions \eqref{eq:KHDAE:FloquetFormSolution}) decay to zero, which renders the origin asymptotically stable.
Note that this exponent-based condition \eqref{eq:KHDAE:ExpCondition} is equivalent to the multiplier-based condition \eqref{eq:KHDAE:MultCondition} because of \eqref{eq:KHDAE:MultExp}. The Floquet exponents $\alpha_l$ are not unique, but instead appear in groups of equal real part: Shifting the exponent by $\ic k \omega$ with $k \in \Zspace$ and multiplying the periodic part of~\eqref{eq:KHDAE:FloquetFormSolution} by $\ex^{-\ic k \omega t}$ yields the same Floquet form solution $\vy_l$.

\subsection{Floquet theory for DAEs}\label{sec:floquetDAE}
While Floquet theory provides a well-established theoretical basis for the stability analysis of LTP ODEs~\eqref{eq:KHDAE:LTPDAE}, an equivalent theory for linear time-periodic DAEs is less developed~\cite{Lamour1998,Lamour2003}. 
Having revised some aspects of Floquet theory in the ODE case, we will now discuss their generalization to DAEs.

In analogy to \eqref{eq:KHDAE:LTPODE}, an LTP DAE of the form 
\begin{equation}\label{eq:KHDAE:LTPDAE}
    \vA\dy = \vJ(t)\vy\textnormal{,}\quad \vJ(t+T) = \vJ(t):= \left.\pd{\vf}{\vx}\right|_{\vx(t)=\vx^\rp(t)}\textnormal{.}
\end{equation}
arises through a linearization process around a periodic solution $\vx^\rp$ of  \eqref{eq:Multibody:nonlinDAE}.
Assume that Equation~\eqref{eq:KHDAE:LTPDAE} has $r$ linearly independent admissible solutions $\vvph_1, \dots, \vvph_r$ which satisfy at all times the linearized constraint equations that result from the rank deficiency of $\vA$. In other words, solutions of \eqref{eq:KHDAE:LTPDAE} are constrained to an $r$-dimensional time-dependent linear subspace~$\cA(t)\subset \Rspace^n$, i.e.,~$\vy(t)\in\cA(t)$ for all $ t \geq t_0$.
This is, for example, ensured if the DAE is index-1 tractable or index-2 tractable~\cite{Griepentrog1986}, or when it is a Hessenberg system, as is the case when the considered system models a mechanical rigid-body system with smooth bilateral constraints \cite{Lamour2003}. Consider an invertible matrix $\vV(t) \in \Rspace^{n \times n}$, continuous in $t$, whose first~$r$ columns $\vv_1(t), \dots, \vv_r(t)$ span $\cA(t)$. These first $r$ columns of $\vV(t)$ are collected in $\vV_{r}(t) \in \Rspace^{n \times r}$. The matrix $\vV(t)$ is not unique but can always be chosen periodic due to the periodicity of the DAE~\eqref{eq:KHDAE:LTPDAE}. The matrix 
\begin{align}
    \vP(t) = \vV(t) \begin{pmatrix}
        \vI_r & \vzero \\ \vzero & \vzero
    \end{pmatrix} \vV(t)^{-1}
\end{align}
is a projection matrix onto $\cA(t)$. 
By construction, both $\vP(t)$ and $\cA(t)$ are $T$-periodic due to the periodicity of~\eqref{eq:KHDAE:LTPDAE}.

Because the solutions $\vvph_k(t)\in\Rspace^n$, $k = 1, \dots, r$ lie in the admissible subspace $\cA(t)$, they can be expressed as linear combinations of the subspace basis $\vv_1(t), \dots, \vv_r(t)$ via
\begin{align}\label{eq:dae:fundasols}
\left[ \vvph_1(t), \dots \vvph_r(t)\right] = \vV_r(t) \vPs(t)  \;,
\end{align}
with $\vPs(t) \in \Rspace^{r \times r}$ invertible. Any solution of Equation~\eqref{eq:KHDAE:LTPDAE} can be written as a linear combination of the solutions $\vvph_1, \dots, \vvph_r$ expressed through Equation~\eqref{eq:dae:fundasols}, allowing to define the fundamental solution matrix of the DAE as 
\begin{align}\label{eq:dae:fundamat}
    \vPh(t, t_0) := \vV(t) \begin{pmatrix}
        \vPs(t) \vPs(t_0)^{-1} & \vzero \\ \vzero & \vzero
    \end{pmatrix} \vV(t_0)^{-1} \;.
\end{align}
Indeed, by writing the solution $\vy(t)$ as $\vy(t) = \vV_r(t) \vPs(t) \vxi$ for some constant $\vxi \in \Rspace^r$ and multiplying the expression~\eqref{eq:dae:fundamat} for~$\vPh(t, t_0)$ with the initial condition $\vy(t_0) = \vV_r(t_0) \vPs(t_0) \vxi$, we confirm 
\begin{align}
    \vPh(t, t_0) \vy(t_0) = \vV(t) \begin{pmatrix}
        \vPs(t) \vxi & \vzero \\ \vzero & \vzero
    \end{pmatrix} = \vy(t) \;,
\end{align}
which is the desired fundamental solution property. The above definition of the fundamental solution matrix in a linear DAE aligns with that of~\cite[Theorem~3.1]{Lamour2003}, with the assumption of an index-2-tractable DAE somewhat weakened to uniqueness and constant dimension of the solution space. In contrast to the smooth ODE case, the fundamental solution matrix $\vPh(t, t_0)$ defined in Equation~\eqref{eq:dae:fundamat} is not invertible and is only well-defined for $t \geq t_0$. Its initial condition
\begin{align}
    \vP(t_0) := \vPh(t_0, t_0) = \vV(t_0) \begin{pmatrix}
        \vI_{r\times r} & \vzero \\ \vzero & \vzero
    \end{pmatrix} \vV(t_0)^{-1}
\end{align}
is a projection matrix onto the initial solution space $\cA(t_0)$. This has the effect that the fundamental solution matrix can also be multiplied to arbitrary, non-admissible initial conditions $\vy_0 \in \Rspace^n$, which are first projected to the solution space using $\vP(t_0)$ and then evolved in time on the solution space $\cA(t)$.

In analogy to the ODE case, the monodromy matrix $\mo:=\vPh(T, 0)$ can be used to analyze the stability properties of the origin of~\eqref{eq:KHDAE:LTPDAE}. 
The spectrum of $\mo$ now features $r$ nontrivial Floquet multipliers $\lambda_1, \dots, \lambda_r$ as eigenvalues of $\vPs(t) \vPs(t_0)^{-1}$, and additionally $n-r$ eigenvalues which correspond to the non-admissible directions.
We call these additional eigenvalues $\lambda_{r+1}= \dots= \lambda_n = 0$
``projection multipliers'' to indicate that they do not carry stability information but merely represent the projection onto $\cA(t)$.
Hence, only the $r$ non-zero Floquet multipliers $\lambda_1, \dots, \lambda_r$ need to be taken into consideration for stability analysis.
Note that the condition \eqref{eq:KHDAE:MultCondition} for asymptotic stability of the origin still holds, since the projection multipliers are trivially located inside the unit circle.
It can be shown (cf.\ \ref{sec:floquetform}) that every distinct nontrivial eigenvalue $\lambda_1, \dots, \lambda_r$ of $\mo$ has an associated Floquet form solution of the form~\eqref{eq:KHDAE:FloquetFormSolution} with the corresponding Floquet exponent $\alpha_l$ satisfying \eqref{eq:KHDAE:MultExp}. The trivial projection multipliers $\lambda_{r+1}= \dots= \lambda_n=0$ do not generate any further Floquet form solutions as \eqref{eq:KHDAE:MultExp} cannot be solved for a Floquet exponent in that case.

\subsection{Hill eigenvalue problem for DAEs}\label{ch:KHDAE:Hill}

In the ODE case, it is well known that some of the eigenvalues of the truncated Hill matrix \eqref{eq:hillhbm:dRdX:matrix} approximate the Floquet exponents, as proven in~\cite{Zhou2004}.
By appropriately modifying the derivation in~\cite{Lazarus2010}, the Hill eigenvalue problem will be generalized to DAEs in the following subsection.

Substituting one of the $r$ Floquet form solutions \eqref{eq:KHDAE:FloquetFormSolution} into~\eqref{eq:KHDAE:LTPDAE} and multiplying with $\ex^{-\alpha t}$ yields
\begin{equation}
    \vA\dot{\vp}(t) + \vA\alpha\vp(t) = \vJ(t)\vp(t)\textnormal{.}
\end{equation}
After expressing both $\vp$ and $\vJ$ as their complex-valued Fourier series 
\begin{equation}\label{eq:KHDAE:Fourier}
    \vJ(t) = \sum_{l=-\infty}^{\infty} \vJ_l\ex^{\ic l\omega t}, \quad \vp(t) = \sum_{l=-\infty}^{\infty} \vp_l\ex^{\ic l\omega t}
\end{equation}
and applying an index shift, we arrive at
\begin{equation}
	\sum_{k=-\infty}^{\infty}\!\!\left( -\alpha\vA\vp_k-\ic k\omega\vA\vp_k   + \!\!\sum_{l=-\infty}^{\infty} \vJ_l\vp_{k-l}\right)\!\ex^{\ic k\omega t}= 0\textnormal{,}
\end{equation}
which holds for all $t\geq t_0$.
This is a Fourier series where every coefficient vanishes independently.
In particular, the $k$-th summand can be expanded into
\begin{equation}\label{eq:KHDAE:deriveHEP}
	\alpha\vA\vp_k \!\!=\! -\ic k\omega\vA\vp_k + 
	\begin{bmatrix}
		\cdots \!\!\!\!& \vJ_{k+1}\!\! & \vJ_{k}\!\! & \vJ_{k-1} \!\!\!\!&\cdots
	\end{bmatrix}\!\!\!
	\begin{bmatrix}
		\vdots \\ \vp_{-1} \\ \vp_{0} \\ \vp_{1} \\ \vdots
	\end{bmatrix}\!\!\textnormal{.}
\end{equation}
After evaluating this expression for all~$k=..., -1, 0, 1, ...$, the resulting terms are concatenated into
\begin{equation}\label{eq:KHDAE:generalizedHill}
	\alpha \!
	\underbrace{\begin{bmatrix}
			\ddots\!\!&&&&\\
			&\!\!\vA\!\!&&&\\
			&&\!\!\vA\!\!&&\\
			&&&\!\!\vA\!\!&\\
			&&&&\!\!\ddots
	\end{bmatrix}\!}_{\vA_\infty}
	\underbrace{\!\begin{bmatrix}
			\vdots \\ \vp_{-1} \\ \vp_{0} \\ \vp_{1}\\ \vdots
	\end{bmatrix}\!}_{\vp_\infty} \!
	= \vH_\infty
	\underbrace{\!\begin{bmatrix}
			\vdots \\ \vp_{-1} \\ \vp_{0} \\ \vp_{1}\\ \vdots
	\end{bmatrix}\!}_{\vp_\infty}
\end{equation}
with the bi-infinite vector of Fourier coefficients~$\vp_\infty$, the bi-infinite block-diagonal matrix $\vA_\infty$ and the generalized Hill matrix
\begin{equation}\label{eq:KHPDAE:hillmat}
    \vH_\infty = \!\begin{bmatrix}
			\ddots &  \vdots & \vdots & \vdots & \iddots \\
			\cdots  & \!\!\!\vJ_0+\ic \omega\vA\!\!\! & \vJ_{-1} & \vJ_{-2}  & \cdots \\
			\cdots & \vJ_1 & \!\!\!\vJ_0 \!\!\!& \vJ_{-1} &  \cdots \\
			\cdots & \vJ_2 & \vJ_1 & \!\!\!\vJ_0-\ic \omega\vA \!\!\!&  \cdots \\
			\iddots &  \vdots & \vdots & \vdots & \ddots \\
	\end{bmatrix}\textnormal{.}
\end{equation}
In other words, in the context of LTP DAEs, the infinite dimensional Hill eigenvalue problem becomes a generalized eigenvalue problem
\begin{equation} \label{eq:KHPDAE:generalized}
	\alpha\vA_\infty\vp_\infty=\vH_\infty\vp_\infty\textnormal{,}
\end{equation}
which is solved by determining the roots of the characteristic polynomial
\begin{equation}
	\det(\alpha\vA_\infty-\vH_\infty)=0\textnormal{.}
\end{equation}
An eigenvalue~$\alpha$ together with its corresponding eigenvector~$\vp_\infty$ fully characterizes a Floquet form solution of the DAE~\eqref{eq:KHDAE:LTPDAE}. 
Indeed, the classical Hill eigenvalue problem~\cite{Lazarus2010} is recovered for the special case~$\vA=\vI$, yielding purely diagonal $\ic \omega $ terms on the main diagonal of the Hill matrix and an identity operator for $\vA_{\infty}$.

Just as in the ODE case, the truncated problem
\begin{equation} \label{eq:KHDAE:truncatedGeneralizedHill}
	\hat{\alpha}\vA_N\vp_N=\vH_N\vp_N\textnormal{,}
\end{equation}
where the subscript $N$ indicates that only the centermost $[n(2N+1)\times n(2N+1)]$-blocks of $\vA_\infty$ and $\vH_\infty$ are retained, is considered for the sake of numerical tractability. Here, the eigenvalue $\hat{\alpha}$ and the eigenvector~$\vp_N$ are approximations of the Floquet exponent $\alpha$ and the $n(2N+1)$ centermost entries of $\vp_\infty$, respectively.
A comparison between Equations~\eqref{eq:hillhbm:dRdX:matrix} and~\eqref{eq:KHPDAE:hillmat} reveals that the Jacobian of the HBM equations is indeed equal to the truncation of the infinite-dimensional Hill matrix $\vH_{\infty}$.

The error incurred through the truncation does not affect all eigenvalues of the Hill eigenvalue problem equally.
Therefore, classical eigenvalue-based Hill stability methods rely on a sorting procedure to identify the eigenvalues which are best approximating the Floquet exponents. As already described in Sections \ref{sec:floquet} and \ref{sec:floquetDAE}, the Floquet multipliers are expected to occur in groups with equal real part and imaginary parts differing by integer multiples of~$\omega$. Within each group, the eigenvalues that are located closer to the center of the group are expected to be better approximations of the Floquet exponents than the ones located more towards the outer edges of the group, which may become spurious. Both the symmetry-based sorting criterion~\cite{Guillot2020} and the imaginary-part-based criterion~\cite{Zhou2004} therefore attempt to locate eigenvalues which are expected to be correct in this sense.

The smoother $\vf$ is, the faster the Fourier coefficients $\vJ_l$ of $\vJ(t)$ decay with increasing $l$. Therefore, the truncation effects on the Hill matrix are especially pronounced if $\vf$ is nonsmooth, complicating the process of selecting the correct eigenvalues as accurate approximations for the Floquet exponents. This is exemplified by the frictional oscillator example discussed in Section \ref{sec:frictionoscillator}.
For ODEs, the
Koopman-Hill method offers an alternative, sorting-free stability approach. The following subsection generalizes this method to DAEs.

\subsection{Koopman-Hill method for DAEs}\label{ch:KHDAE:LiftedDynamics}
The Koopman-Hill method for ODEs \cite{Bayer2023,Bayer2024,Bayer2025} is a stability analysis tool that avoids a complete eigenvalue decomposition of the truncated Hill matrix and the subsequent sorting
by instead directly calculating the monodromy matrix $\mo$.
In this section, we derive a generalized Koopman-Hill formulation for DAEs by appropriately modifying the steps proposed in~\cite{Bayer2023}. In particular, this includes the use of the
Koopman framework \cite{Brunton2022} to re-interpret the linear time-periodic DAE~\eqref{eq:KHDAE:LTPDAE} as a linear time-invariant DAE of higher dimension. 

Originally developed to extract global information from nonlinear autonomous dynamics via its spectral decomposition, known as Koopman eigenfunctions~\cite{Koopman1931,Mauroy2016,Mauroy2020b}, the Koopman operator has recently been applied mainly in data-driven modeling to construct linear systems that capture nonlinear behavior~\cite{Brunton2022,Bruder2021,Budisic2012}. We provide here only a short overview over selective aspects of the Koopman framework. In particular, the estimation of the Koopman operator from data is not covered. For a broader treatment of the considered methodology, the reader is referred to~\cite{Mauroy2020a, Budisic2012}.

The Koopman framework amounts to a perspective shift. Instead of the evolution of states in the state space, the evolution of functions in a function space is considered. Classically, for nonlinear autonomous dynamical systems, this function space can be an arbitrary Banach space under some mild assumptions~\cite{Mauroy2020}. With the particular aim of deriving the Koopman-Hill method from the LTI DAE~\eqref{eq:KHDAE:LTPDAE}, the suitable function space is the 
infinite dimensional space
\begin{equation}\label{KHDAE:observables}
	\mathfrak{F}=\mathrm{span}\{\mathrm{\vg}_k\mid k\in\mathbb{Z}\}, \quad \mathrm{\vg}_k(t,\vy)=\vy \ex^{-\ic k\omega t}\textnormal{,}
\end{equation}
spanned by products of the state $\vy$ and Fourier basis functions in time, ordered descendingly. The time $t$ is considered here as an extended state with dynamics $\dot{t} = 1$ and initial condition $t(0) = 0$ to make the DAE~\eqref{eq:KHDAE:LTPDAE} autonomous but nonlinear. 

The infinitesimal Koopman generator $\cL: \mathfrak{F} \rightarrow \mathfrak{F}$ \cite{Mauroy2020a} denotes the linear operator which maps any observable  $g \in \mathfrak{F}$ to its Lie derivative~$\dot{g} \in \mathfrak{F}$.
The linearity of the infinitesimal Koopman generator comes at the cost of dealing with a mapping on a (generally infinite-dimensional) function space $\mathfrak{F}$ instead of the (finite-dimensional) state space~$\Rspace^n$. 
A matrix representation of this linear operator, mapping the basis functions to their Lie derivatives, can be found by considering the Lie derivative of each basis function independently. 

In the particular case considered here of the linear time-periodic DAE ~\eqref{eq:KHDAE:LTPDAE}, the Lie derivative of the $k$-th basis function of the form~\eqref{KHDAE:observables} is
\begin{equation}
	\dot{\mathrm{\vg}}_k=-\ic k\omega\vy \ex^{-\ic k\omega t}+\dot{\vy}\ex^{-\ic k\omega t}\textnormal{.}
\end{equation}
By left-multiplying with $\vA$ and inserting the Fourier expanded dynamics \eqref{eq:KHDAE:Fourier} to eliminate $\dot{\vy}$, we obtain
\begin{align}\label{eq:KHDAE:Liederivative}
	\vA\dot{\mathrm{\vg}}_k&=-\ic k\omega\vA\vy \ex^{-\ic k\omega t} +\sum_{l=-\infty}^{\infty}\vJ_l\vy\ex^{-\ic(k-l)\omega t}\nonumber\\
	&=\! -\ic k\omega\vA\vg_k + 
	\left[\begin{matrix}
		\cdots \!\!\!\!& \vJ_{k+1}\!\! & \vJ_{k}\!\! & \vJ_{k-1} \!\!\!\!&\cdots
	\end{matrix}\right]\!\!
	\underbrace{\begin{bmatrix}
		\vdots \\ \vg_{-1} \\ \vg_{0} \\ \vg_{1} \\ \vdots
	\end{bmatrix}}_\vg\!\!\textnormal{.}
\end{align}
After noticing the similarity to \eqref{eq:KHDAE:deriveHEP}, one can proceed as in Section \ref{ch:KHDAE:Hill}.
Evaluating \eqref{eq:KHDAE:Liederivative} for all $k= ...,-1,0,1,...$ and concatenating the terms accordingly yields the linear time-invariant DAE
\begin{equation}\label{eq:KHDAE:liftedDAE}
\vA_\infty\dot{\vg}=\vH_\infty	\vg \textnormal{.}
\end{equation}
In other words, the dynamics of the basis functions are described by an infinite dimensional LTI DAE which shares its overall structure with the generalized Hill eigenvalue problem \eqref{eq:KHDAE:generalizedHill}.

To make the dynamics numerically tractable, we only consider the centermost blocks of the system \eqref{eq:KHDAE:liftedDAE}, yielding the finite-dimensional LTI DAE
\begin{equation}\label{eq:KHDAE:truncatedDAE}
	\vA_N\dot{\vz}(t)=\vH_N\vz(t), \quad \vz(0)=\vz_0\textnormal{.}
\end{equation}
While there is no immediate guarantee that this truncation is justified in the DAE case, the error incurred by this truncation in the ODE case is proven to be bounded and goes to zero for sufficiently smooth dynamics as $N$ is increased \cite{Bayer2024}.

Due to linearity, the truncated problem~\eqref{eq:KHDAE:truncatedDAE}
is solved by a fundamental solution matrix.
For LTI DAEs of the form
\begin{equation}\label{eq:KHDAE:LTIDAE}
	\bar{\vA}\dot{\vz}(t)=\bar{\vH}\vz(t), \quad \vz(0)=\vz_0\textnormal{,}
\end{equation}
with $\bar{\vA}$, $\bar{\vH}$ commutative, a closed-form expression for the fundamental solution matrix is available \cite{Campbell1976}.
This commutativity property can be enforced without loss of generality for arbitrary matrices $\vA_N$, $\vH_N$ by left-multiplying with an inverted matrix pencil \cite{Golub1986}, i.e.,
\begin{subequations}
    \begin{align}
        \Bar{\vA}&:=(a\vA_N-\vH_N)^{-1}\vA_N\;\textnormal{,}\\
        \Bar{\vH}&:=(a\vA_N-\vH_N)^{-1}\vH_N\;\textnormal{.}
    \end{align}
\end{subequations}
Note that left-multiplying the dynamics \eqref{eq:KHDAE:truncatedDAE} with an invertible matrix does not affect the solution space. The scalar $a\in\mathbb{C}$ can be chosen arbitrarily, as long as the pencil is invertible.
For computational reasons, it makes sense to choose
\begin{equation}
	a = \frac{\Vert\vH_N\Vert}{\Vert\vA_N\Vert}
\end{equation}
for any given matrix norm $\Vert\cdot\Vert$ to improve the conditioning of the numerical inversion of the matrix pencil if the matrices $\vH_N$ and $\vA_N$ are of different orders of magnitude.

According to \cite{Campbell1976}, the fundamental solution matrix of \eqref{eq:KHDAE:LTIDAE} is given by
the generalized matrix exponential
\begin{equation}\label{eq:KHDAE:generalizedExp}
	\vE(t,\bar{\vH},\bar{\vA}) = \ex^{\left(\Bar{\vA}^D\Bar{\vH} t\right)}\bar{\vA}^D\bar{\vA}\textnormal{.}
\end{equation}
The operator $(\cdot)^D$ denotes the Drazin inverse, which is a pseudo-inverse for quadratic matrices.
For regular matrices, $\vX^D=\vX^{-1}$ holds.
In general, the operator inverts the regular components of a matrix and sets the singular components to zero, in the sense that the Drazin inverse of a matrix $\vX\in\Rspace^{n\times n}$ in Schur form
\begin{equation}\label{eq:KHDAE:preDrazin}
	\vX = \vT\begin{bmatrix}
		\vR & \mathbf{0} \\ \mathbf{0} & \vN
	\end{bmatrix}\vT^{-1}\textnormal{,}
\end{equation}
with $\vR \in \mathbb{R}^{r\times r}$ regular and $\vN \in \mathbb{R}^{(n-r)\times (n-r)}$ nilpotent is given by
\begin{equation}\label{eq:KHDAE:postDrazin}
	\vX^D = \vT\begin{bmatrix}
		\vR^{-1} & \mathbf{0} \\ \mathbf{0} & \mathbf{0}
	\end{bmatrix}\vT^{-1}\textnormal{.}
\end{equation}

The expression \eqref{eq:KHDAE:generalizedExp} is a DAE-generalization of the matrix exponential for LTI ODEs (recovered in the special case $\bar{\vA}=\vI$).

Having stated a method to solve the lifted LTI dynamics \eqref{eq:KHDAE:truncatedDAE}, we need to relate solution trajectories $\vz(\cdot)$ in the lifted space to solution trajectories $\vy(\cdot)$ of the original linear time-periodic dynamics \eqref{eq:KHDAE:LTPDAE}. The procedure is equivalent to the ODE-formulation of the Koopman-Hill method \cite{Bayer2024}. As $\vz(\cdot)$ approximates trajectories of the observables \eqref{KHDAE:observables}, the initial condition $\vz_0$ must be chosen in accordance, i.e.,
\begin{equation}
    \vz_0 = \vg(0,\vy_0) = \vW_N \vy_0
\end{equation}
with
\begin{equation}
	\vW_N=\begin{bmatrix}
		\vI&\vI&\cdots&\vI&\vI
	\end{bmatrix}^{\T} \in \mathbb{R}^{n(2N+1)\times n} \textnormal{.}
\end{equation}
The centermost observable (which is equal to $\vy$) can be extracted using
\begin{equation}
    \vy(t) = \vg_0(t,\vy(t)) \approx \vC_N \vz(t)
\end{equation}
with
\begin{equation}
	\vC_N=\begin{bmatrix}
		\cdots&\!\!\!\mathbf{0}&\vI&\mathbf{0}\!\!\!&\cdots
	\end{bmatrix} \in \mathbb{R}^{n\times n(2N+1)}\textnormal{.}
\end{equation}
Consequently, we can approximate the solution of the LTP DAE~\eqref{eq:KHDAE:LTPDAE} as
\begin{equation}\label{eq:KHDAE:KHmethod}
		\vy(t)\approx \vC_N \exp\left( \bar{\vA}^D \bar{\vH} T \right) \bar{\vA} ^D \bar{\vA}\vW_N\vy_0
\end{equation}

Note that this solution formula even holds for non-admissible initial conditions $\vy_0\in\Rspace^n$. This is because, the non-invertible factor $\bar{\vA}^D\bar{\vA}$ in the generalized matrix exponential \eqref{eq:KHDAE:generalizedExp} projects on the space of admissible states in the lifted space. Equivalently for the original linear time-periodic dynamics, we obtain an approximation for the projection matrix onto the initial solution space $\cA(0)$
\begin{equation}
    \vP(0) \approx \vC_N \bar{\vA}^D\bar{\vA} \vW_N
\end{equation}
for arbitrary initial conditions $\vy_0\in\Rspace^n$.

From \eqref{eq:KHDAE:KHmethod}, one can extract the full Koopman-Hill formula,
\begin{subequations}\label{eq:abstract:mainFormula}
\begin{equation}
	\mo\approx\moN = \vC_N \exp\left( \bar{\vA}^D \bar{\vH} T \right) \bar{\vA} ^D \bar{\vA}\vW_N
\end{equation}
with
\begin{align}
	\bar{\vA} &= (a\vA_N-\vH_N)^{-1}\vA_N \;, \\
    \bar{\vH} &= (a\vA_N-\vH_N)^{-1}\vH_N \;, \\
    \vC_N &= \begin{bmatrix}
        \cdots&\!\!\!\mathbf{0}&\vI&\mathbf{0}\!\!\!&\cdots
    \end{bmatrix}\in\mathbb{R}^{n\times n(2N+1)} \;, \\
    \vW_N &= \begin{bmatrix}
        \vI&\vI&\cdots&\vI&\vI
    \end{bmatrix}\T \in \mathbb{R}^{n(2N+1) \times n}\;,
\end{align}
\end{subequations}
as an approximation for the monodromy matrix $\mo$ of the linear time-periodic DAE \eqref{eq:KHDAE:LTPDAE}.
While the error bound derived in \cite{Bayer2025} is not immediately applicable for DAEs, the numerical examples in Section~\ref{ch:Examples} show good convergence behavior, i.e., we can expect that
\begin{equation}
	\lim_{N \to \infty} \moN = \mo
\end{equation}
holds.
The spectrum of this matrix consists of Floquet multipliers which can be used to evaluate the stability of the system as well as projection multipliers which are an artifact of restricting the DAE dynamics through constraint equations.
A remarkable property of the Koopman--Hill method for DAEs is that neither the index of the DAE nor the dimension $r$ of the solution space need to be known a priori. Specifically, the approach can, in principle, even be applied when the index of the DAE changes over a period, provided that the matrix pencil of the lifted LTI DAE is regular.

\section{Examples}\label{ch:Examples}
To validate the stability method derived in Section \ref{ch:KHDAE}, we consider two example systems: a nonlinear mathematical pendulum and a nonsmooth frictional oscillator.

\subsection{Forced mathematical pendulum}

The pendulum depicted in Figure \ref{fig:ex:pendulum:sys} consists of a massless rod of length $l$ that constrains the point $M$ of mass $m$ to a circular path around the origin $O$.
The point mass is horizontally forced by a harmonic excitation
\begin{equation}
F(t)=A\sin(\omega t + \theta),
\end{equation}
where $\omega$ denotes the angular frequency.
The system is affected by a dashpot damper with damping constant $d$ as well as gravity $g$.

\begin{figure}[hbt]
	\centering
    \includegraphics{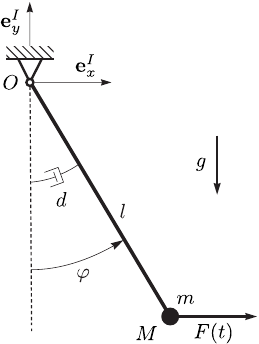}
    \caption{Forced nonlinear mathematical pendulum with one degree of freedom.}
    \label{fig:ex:pendulum:sys}
\end{figure}

The dynamics of the pendulum can be described either as an ODE or as a DAE.
The ODE representation considered here uses the minimal coordinate~$\varphi$.
With the state vector $\vx=\left( \varphi, \dot{\varphi} \right)\T$, the equations of motion (EoMs) in first order form
\begin{equation}\label{eq:Examples:PendulumODE}
	\dot{\vx}= \underbrace{\begin{bmatrix}
			\dot{\varphi} \\
			\frac{1}{ml^2}\left(F(t)l\cos(\varphi)-mgl\sin(\varphi)-d\dot{\varphi}\right)
	\end{bmatrix}}_{\vf(t,\vx)}
\end{equation}
result from the balance of momentum.

Alternatively, the cartesian coordinates $\vq = (q_1, q_2)\T$ with $\vr_{OM} = q_1\ve_x^I + q_2\ve_y^I$ yield the EoMs in DAE form.
Collected in the state vector $\vx = (q_1,q_2,\dot{q}_1,\dot{q}_2,\lambda)^{\T}$ together with
their associated velocities $\dot{\vq}$ and the Lagrange multiplier $\lambda$, the Lagrangian equations of the first kind \eqref{eq:Multibody:Lagrange1} can be derived and be cast in the considered DAE form
\begin{equation}\label{eq:Examples:PendulumDAE}
	\underbrace{\begin{bmatrix}
		1&&&&\\
		&1&&&\\[3pt]
		&&1&&\\[3pt]
		&&&1&\\[3pt]
		&&&&0\\
	\end{bmatrix}}_\vA\!\dot{\vx}\!=\!\underbrace{\begin{bmatrix}
	\dot{q}_1 \\
	\dot{q}_2 \\[3pt]
	\frac{1}{m}\left(2q_1\lambda+F(t)-\frac{d}{l^2}\dot{q}_1\right)\\[3pt]
    \frac{1}{m}\left(2q_2\lambda-mg-\frac{d}{l^2}\dot{q}_2\right)\\[3pt]
	q_1^2+q_2^2-l^2
	\end{bmatrix}}_{\vf(t,\vx)} \;.
\end{equation}
The scalar constraint $g(\vq)=q_1^2+q_2^2-l^2=0$ only depends on the position $\vq$. Because it does not involve $\dot{\vq}$ or $\lambda$, the system \eqref{eq:Examples:PendulumDAE} is an index-3 DAE.

In the following we consider the dimensionless system parameters $m=1$, $l=1$, $g=10$ and $d=0.1$.
 The harmonic excitation force of the system is given by $F(t)=3\sin(t)$ with period $T=2\pi$.
For the purpose of comparison, the classical HBM for system \eqref{eq:Examples:PendulumODE} and the DAE-generalized HBM for system \eqref{eq:Examples:PendulumDAE} are performed with $L=1024$ sample points, harmonic order $N_\mathrm{HBM}=30$ and an absolute tolerance $\mathrm{tol}_\mathrm{abs}=10^{-8}$ of the Newton solver.
As expected, the resulting periodic solutions of the ODE and the DAE coincide.

The stability of the HBM solutions can now be analyzed with the Koopman-Hill method.
In practice, the Koopman-Hill method and the HBM are usually computed with the same harmonic order $N = N_\mathrm{KH} = N_\mathrm{HBM}$.
Here, however, for the purpose of convergence analysis of the Koopman-Hill method for DAEs, we keep the order for harmonic balance $N_\mathrm{HBM}$ constant while considering smaller harmonic orders $N_\mathrm{KH}$ for the Koopman-Hill method.
This ensures that errors are comparable and not influenced by HBM effects.

Since the ODE formulation~\eqref{eq:Examples:PendulumODE} of the pendulum has two states, the linearized system dynamics admits two linearly independent Floquet form solutions. As a consequence, also the admissible solution space~$\cA(t)$ of the corresponding linearization of the DAE~\eqref{eq:Examples:PendulumDAE} is a subspace of the $\Rspace^5$ with constant dimension $r = 2$. Hence, the Floquet theory results of Chapter~\ref{ch:KHDAE} apply directly.

Figure \ref{fig:ex:pendulum:uc} shows the Floquet multipliers $\lambda_{\mathrm{KH}}$ computed for $N_\mathrm{KH}=10$ harmonics.
As it can be observed, both the ODE and DAE formulations produce a complex conjugate pair of Floquet multipliers $\lambda_{1,2}\approx0.24 \pm 0.69$, which coincides visually with the eigenvalues $\lambda_\mathrm{ref}$ of a reference monodromy matrix.
This reference is determined through the classical multiple-pass integration method applied to the ODE formulation, i.e., by time-simulating $n$ linearly independent unit perturbations of the ODE over one period~\cite{Peletan2013}.
For the DAE formulation of the Koopman-Hill method, in addition to the complex conjugate pair of Floquet multipliers, we obtain three projection multipliers $\lambda_{3,4,5}=0$ at the origin.
This is expected, as the subspace of admissible states $\cA(t) \subset \Rspace^5$ of the DAE is only of dimension $r=2$, which is the dimension of the equivalent ODE.

\begin{figure}[hbt]
	\centering
    \includegraphics{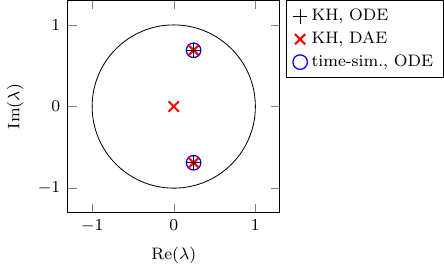}
    \caption{Floquet multipliers of the pendulum system, comparison of the Koopman-Hill method (KH) for the ODE and DAE formulations with a time-simulated reference.}
    \label{fig:ex:pendulum:uc}
\end{figure}

To investigate the convergence of the Koopman-Hill method for the system, Figure \ref{fig:ex:pendulum:conv} shows the distance of the complex conjugate Floquet multipliers $\lambda_\mathrm{KH}$ to the reference for different harmonic orders $N_\mathrm{KH}$.
In general, the approximation improves for increasing $N_\mathrm{KH}$.
However, one can observe that only every other increase of the harmonic order $N_{\mathrm{KH}}$ actually results in an improved numerical value. 
This phenomenon is due to the fact that non-autonomous systems with harmonic forcing typically only excite odd frequency components, an effect that is also exploited by the authors of \cite{Legrand2024}.
Since the time-simulated reference solution is computed with an absolute tolerance of $10^{-6}$, the values of the Floquet multipliers settle at this order of magnitude for both the ODE and DAE case. Indeed, the distance between the projection multipliers and their reference value $0$, which is not affected by integration tolerances, has a similar convergence rate as the other DAE Floquet multipliers but keeps converging beyond the accuracy $10^{-6}$ of the multiple-pass reference.
\begin{figure}[hbt]
	\centering
    \includegraphics{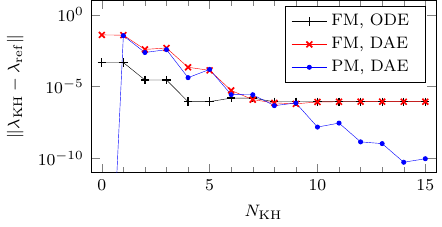}
    \caption{Convergence behavior of the Koopman-Hill method for the pendulum system: Floquet multipliers (FM) of the ODE and DAE formulations and projection multipliers (PM) of the DAE formulation for different harmonic orders $N_\mathrm{KH}$.}
    \label{fig:ex:pendulum:conv}
\end{figure}

The DAE-formulated Koopman-Hill method requires the computation of the Drazin inverse $\bar{\vA}^D$. While many approaches for computing the Drazin inverse of a matrix exist~\cite{Campbell2020}, the approach taken here, as indicated in \eqref{eq:KHDAE:preDrazin} and \eqref{eq:KHDAE:postDrazin}, comes down to inverting a regular matrix block $\vR$ and neglecting a nilpotent matrix block $\vN$ within the Schur decomposition of $\bar{\vA}$.
In the ordered Schur decomposition, both $\vR$ and $\vN$ are of upper triangular structure.
Hence, the main diagonal of $\vR$ consists of all non-zero eigenvalues of $\bar{\vA}$ while all eigenvalues equal to zero appear on the main diagonal of $\vN$. This raises the computational question of how small an eigenvalue must be to be considered zero, i.e., to be sorted into the nilpotent block $\vN$.
Figure \ref{fig:ex:pendulum:drazin} shows the eigenvalues of $\bar{\vA}$ for different harmonic orders $N_\mathrm{KH}$. 
The distribution is clearly divided into two groups.
For all harmonic orders, $\frac{2}{5}$ of the eigenvalues are above a specified tolerance $\epsilon=10^{-4}$ and $\frac{3}{5}$ are below.
This proportion is consistent with the fact that the DAE-formulation of the system produces $2$ Floquet multipliers and $3$ projection multipliers for all harmonic orders $N_{KH}$.
This confirms that the chosen tolerance produces an accurate result.

\begin{figure}[hbt]
	\centering
    \includegraphics{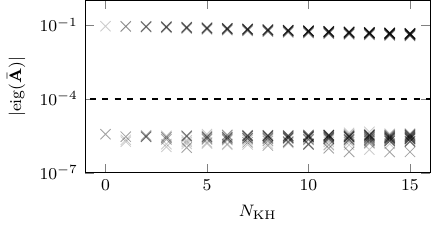}
    \caption{Eigenvalue distribution of $\bar{\vA}$ for the pendulum system, used to compute the Drazin inverse.}
    \label{fig:ex:pendulum:drazin}
\end{figure}

Figure~\ref{fig:ex:pendulum:frc} shows the frequency response curve of the pendulum angle, computed with HBM and subsequent Koopman-Hill stability evaluation for the DAE \eqref{eq:Examples:PendulumDAE}. The angle was computed from the DAE states $q_1$ and $q_2$ via $\tan \varphi(t) = \frac{q_1}{-q_2}$. 
The frequency response curve shows Duffing-like softening behavior.
This behavior is expected: After a Taylor expansion of the trigonometric functions in \eqref{eq:Examples:PendulumODE} and neglecting all terms of order $\cO(\varphi^{4})$, the dynamics in angle coordinates are of the form 
\begin{align}\label{eq:Examples:duffing}
    \ddot{\varphi} + \frac{d}{ml^2} \dot{\varphi} + \frac{g}{l} (\varphi - \frac{1}{6} \varphi^3)  = F(t) \frac{1}{ml} (1 - \frac{1}{2} \varphi^2)\textnormal{.}
\end{align}
Besides the parametric excitation $- \frac{1}{2}F(t)\varphi^2$, Equation \eqref{eq:Examples:duffing} is the Duffing oscillator with negative cubic stiffness, which is well-known for its softening behavior. 
All three expected branches in the region between $\underline{\omega} \approx 2.22$ and $\overline{\omega} \approx 2.94$ are found using HBM and their stability is classified correctly using the Koopman-Hill method for DAEs.

\begin{figure}[hbt]
	\centering
    \includegraphics{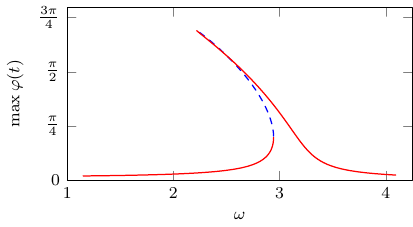}
    \caption{Frequency response curve of the pendulum system in DAE formulation. Stable branches are shown in red, unstable branches in dashed blue.}
    \label{fig:ex:pendulum:frc}
\end{figure}

\subsection{Forced two-body oscillator with friction}\label{sec:frictionoscillator}
As a second example, we consider the frictional oscillator system depicted in Figure \ref{fig:ex:oscillator:sys}.
The same oscillator is used in \cite{Legrand2024} to serve as an example for the nonsmooth frictional HBM formulation.
The system consists of two masses $m_1$ and $m_2$ whose horizontal position is described by the minimal coordinates $\vq = (q_1,q_2)^{\T}$.
Both masses are forced by harmonic excitations
\begin{subequations}
    \begin{align}
	F_1(t)&=A_1\sin(\omega t + \theta_1) \\
	F_2(t)&=A_2\sin(\omega t + \theta_2),
    \end{align}
\end{subequations}
of angular frequency $\omega$. The masses are suspended with two spring-damper units with respective coefficients $k_1$, $k_2$, $d_1$ and $d_2$.

\begin{figure}[hbt]
	\centering
    \includegraphics{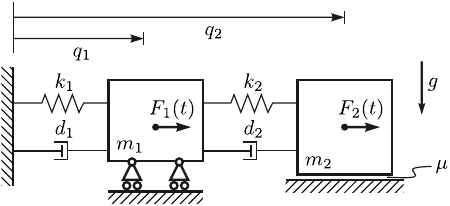}
    \caption{Forced oscillator system with two degrees of freedom, the second mass is affected by Coulomb friction.} 
    \label{fig:ex:oscillator:sys}
\end{figure}

Additionally, the second mass is subjected to a friction force $\lambda_T$ with friction coefficient $\mu$. By setting up the balance of forces for the two-mass oscillator and defining the state vector $\tilde{\vx} = (q_1,q_2,\dot{q}_1,\dot{q}_2)^{\T}$, the EoMs in first order form
\begin{equation}\label{eq:Examples:2DOFoscillatorODE}
	\dot{\tilde{\vx}} \!=\!\!\!\underbrace{\begin{bmatrix}
		0\!\!\!\!\!\!&0\!\!\!\!\!\!&1\!\!\!\!\!\!&0\\
		0\!\!\!\!\!\!&0\!\!\!\!\!\!&0\!\!\!\!\!\!&1\\[3pt]
		-\frac{k_1+k_2}{m_1}\!\!\!\!\!\!&\frac{k_2}{m_1}\!\!\!\!\!\!&-\frac{d_1+d_2}{m_1}\!\!\!\!\!\!&\frac{d_2}{m_1}\\[3pt]
		\frac{k_2}{m_2}\!\!\!\!\!\!&-\frac{k_2}{m_2}\!\!\!\!\!\!&\frac{d_2}{m_2}\!\!\!\!\!\!&-\frac{d_2}{m_2}
	\end{bmatrix} \!\!\tilde{\vx}\! + \!\!\begin{bmatrix}
	0\\0\\[3pt]\frac{1}{m_1}F_1(t)\\[3pt]\frac{1}{m_2}\!(\!F_2(t) \!+ \!\lambda_T\!)\!
	\end{bmatrix}}_{\tilde{\vf}(t,\tilde{\vx})}
\end{equation}
are obtained.
Together with a constitutive law which relates the friction force $\lambda_T$ to the horizontal contact velocity $\gamma_T = \dot{q}_2$, the dynamics of the system are fully described. As introduced in Equation~\eqref{eq:Multibody:FrictionProx},
the nonsmooth Coulomb friction law can be expressed as the implicit proximal point equation
\begin{equation}\label{eq:Examples:FrictionProx}
	0\! =\! \underbrace{\dot{q}_2 \!+\! \min\!\left(0,\lambda_T\!+\!\mu\lambda_N \!-\! \dot{q}_2\!\right) \!+\! \max\!\left(0,\lambda_T\!-\!\mu\lambda_N \!-\! \dot{q}_2\!\right)}_{g(\dot{q}_2, \lambda_T)}
\end{equation}
with normal force $\lambda_N = m_2g$, where the choice $\rho = 1$ was made.
Combining the dynamics \eqref{eq:Examples:2DOFoscillatorODE} with the algebraic force law \eqref{eq:Examples:FrictionProx} yields the DAE representation
\begin{equation}\label{eq:Examples:2DOFoscillatorDAE}
		\underbrace{\begin{bmatrix}
			1&&&&\\
			&1&&&\\
			&&1&&\\
			&&&1&\\
			&&&&0\\
	\end{bmatrix}}_\vA\dot{\vx}=\begin{bmatrix}
		\dot{q}_1\\
		\dot{q}_2\\
		\ddot{q_1}\\
		\ddot{q_2}\\
		0
	\end{bmatrix} = \underbrace{\begin{bmatrix}
		 \vrule\\[6pt]
		 \tilde{\vf}(t,\tilde{\vx}) \\
		 \vrule\\[6pt]
		 g(\dot{q}_2, \lambda_T)
		\end{bmatrix}}_{\vf(t,\vx)}
\end{equation}
of the two-body oscillator with state vector $\vx = (q_1,q_2,\dot{q}_1,\dot{q}_2,\lambda_T)$.

Alternatively, the force law \eqref{eq:Examples:FrictionProx} can be replaced by the smoothed approximation \eqref{eq:Multibody:Inclusionlaw:smoothed} using the hyperbolic tangent with regularization parameter $\alpha$. At the cost of disregarding the nonsmooth character of the system, this provides an explicit relation between $\lambda_T$ and $\dot{q}_2$ which can be directly substituted into \eqref{eq:Examples:2DOFoscillatorODE}. In other words, an ODE describing the dynamics of the two-mass oscillator is obtained. Below, we will use this approximate smoothed system representation as a comparison to the nonsmooth DAE \eqref{eq:Examples:2DOFoscillatorDAE}.

\begin{table}[hbt]
    \centering
    \caption{Parameter sets of Cases A and B considered for the frictional oscillator.}
    \begin{tabular}{lrr}
         \toprule
         Parameter & Case A & Case B \\ 
         \midrule
         $m_1, m_2$ &  $1$ & $1$\\
         $k_1, k_2$ &  $1$ & $1$\\
         $d_1, d_2$ &  $0.02$ & $0.02$\\
         $g$ &  $10$ & $8$\\
         $\mu$ &  $0.9$ & $0.9$\\
         $A_1$ &  $20$ & $20$\\
         $A_2$ &  $10$ & $0$\\
         $\theta_1$ &  $0.4398$ & $\frac{\pi}{2}$\\
         $\theta_2$ &  $2.0106$ & $-$\\
         $\omega$ &  $2\pi$ & $0.293$\\
         \bottomrule         
    \end{tabular}
    \label{tab:Examples:ParametersOscillator}
\end{table}

In the following, we consider two distinct periodic solutions produced by the parameters given in Table \ref{tab:Examples:ParametersOscillator}. Both periodic solutions are determined with the HBM for DAEs and subsequently analyzed with the Koopman-Hill method for DAEs. The parameters in Case A are chosen such that the corresponding periodic solution features a prominent sticking phase and produces Floquet multipliers close to the unit circle. In contrast, the parameters in Case B are chosen in accordance with an example in \cite{Legrand2024} for the purpose of verification and to supplement the results obtained therein with a stability analysis.

In both cases, a classical Newton scheme faces major convergence issues due to the nonsmoothness of the system description, but the trust-region-dogleg solver of the MATLAB function \texttt{fsolve} is able to solve the residual \eqref{eq:hbm:Rofx} of the HBM without numerical issues, as discussed in~\cite{Legrand2024}.
 As a comparison, periodic solutions of the regularized ODE formulation of the oscillator with regularization parameters $\alpha \in [50, 200]$ are determined using the HBM and Koopman-Hill method for ODEs.
Both the classical and the DAE-generalized HBM are performed with $L=4096$ sample points, harmonic order $N_\mathrm{HBM}=100$ and an absolute tolerance of  $\mathrm{tol}_\mathrm{abs}=10^{-8}$.

\begin{figure*}[hbt]
\centering
\captionsetup[subfigure]{justification=centering}
    \begin{subfigure}[t]{0.49\textwidth}
        \raggedleft
        \includegraphics{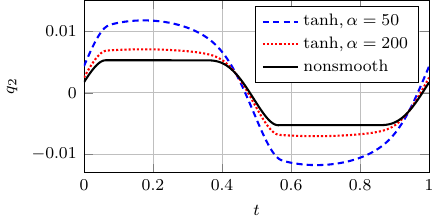}
        \caption{Displacement of the second mass.}
        \label{fig:ex:oscillator:q2}     
    \end{subfigure}
    \hfill
    \begin{subfigure}[t]{0.49\textwidth}
        \raggedleft
        \includegraphics{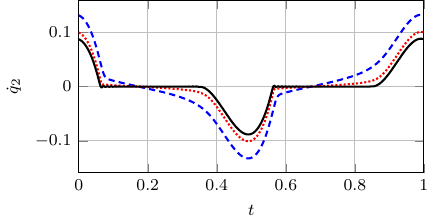}
        \caption{Velocity of the second mass.}
        \label{fig:ex:oscillator:dq2}
    \end{subfigure}
    \hfill
    \begin{subfigure}[b]{0.49\textwidth}
        \raggedleft
        \includegraphics{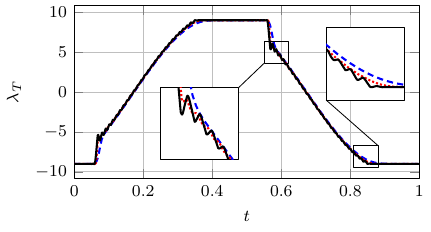}
        \caption{Trajectory of the Coulomb friction force.}
        \label{fig:ex:oscillator:lambda}
    \end{subfigure}
    \hfill
    \begin{subfigure}[b]{0.49\textwidth}
        \raggedleft
        \includegraphics{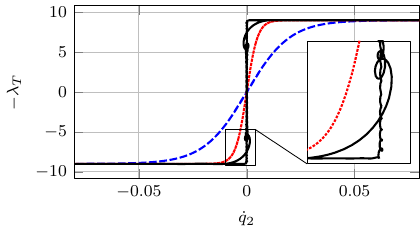}
        \caption{HBM solution of the Coulomb friction force law.}
        \label{fig:ex:oscillator:law}
    \end{subfigure}
    
    \caption{HBM solution in Case A of the nonsmooth oscillator (DAE), comparison to smoothed friction approximation (ODE).}
\end{figure*}

First, the dynamics of the nonsmooth DAE formulation and the smoothed ODE formulation in Case A are considered.
The displacement and velocity of the frictional second mass along the periodic solution are depicted in Figures \ref{fig:ex:oscillator:q2} and \ref{fig:ex:oscillator:dq2}.
The nonsmooth formulation of the oscillator system shows a clear distinction of sticking and slipping phases.
While the periodic solutions of the smoothed system description do not fully capture this distinction, the approximation of the nonsmooth system dynamics improves for increasing~$\alpha$.

The trajectory of the  Coulomb friction force $\lambda_T$ is plotted in Figure \ref{fig:ex:oscillator:lambda}.
While $\lambda_T$ is constant in slipping phases, the behavior in sticking phases is more complex since the force may take a range of values~$\lambda_T\in\left[-\mu\lambda_N,\mu\lambda_N\right] = [-9,9]$.
At the start of each sticking face, the friction force discontinuously jumps to the value necessary to prevent movement, which favors the so-called Gibbs phenomenon.
This phenomenon causes strong oscillations during the whole sticking phase and cannot be reduced by increasing the harmonic order $N$. Although the smoothed dynamics do not suffer from this effect, it can be observed that the steeper approximation $\alpha=200$ also shows a gentle ripple when transitioning from slip into stick.

The dry friction force law in Figure \ref{fig:ex:oscillator:law} is obtained by plotting the coordinates $\left(\dot{q}_2(t_i),-\lambda_T(t_i)\right)$ for all $L=4096$ sample points.
The nonsmooth formulation shows a special behavior: The graph follows different paths when transitioning from slip to stick and vice versa.
In essence, when entering the sticking phase, the graph is curved similar to the smoothed $\tanh$-formulations.
However, on the way back to slipping, the graph of $\left(\dot{q}_2(t_i),-\lambda_T(t_i)\right)$ closely follows the rectilinear set-valued graph of the Coulomb friction law and features knot-like turns.
Again, these properties are a consequence of truncating the Fourier series at a finite harmonic order $N$. 

To compare the accuracy of the different system formulations for the two-body oscillator in Case A, the violation of the constraint equation \eqref{eq:Examples:FrictionProx} over a single period is depicted in Figure \ref{fig:ex:oscillator:constraint}.
For the nonsmooth system formulation, the violation during sticking and slipping phases occurs in different orders of magnitude, reaching its peak right at the slip-to-stick transition before abruptly dropping down.
This matches the observed non-rectilinear and therefore constraint-violating graph in Figure \ref{fig:ex:oscillator:law}.
In contrast, the periodic solutions of the smoothed system mainly lose accuracy during the sticking phase, where the hyperbolic tangent has the largest approximation errors compared to the nonsmooth force law.

\begin{figure}[hbt]
	\centering
    \includegraphics{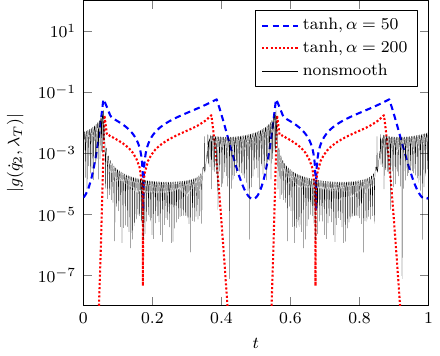}
    \caption{Constraint violation of the nonsmooth and smoothed HBM solutions in Case A.}
    \label{fig:ex:oscillator:constraint}
\end{figure}

To evaluate the stability of these periodic solutions, the Koopman-Hill method is applied to both the nonsmooth DAE formulation and the smoothed ODE formulation with regularization parameter~$\alpha=200$. The results are compared against a highly accurate reference solution determined using a switch model as introduced in \cite{Leine2004}, where the friction force is substituted into the dynamics directly, solving a piecewise linear dynamics exactly during the continuous phases and applying a saltation matrix whenever a stick-slip or slip-stick transition occurs.

\begin{figure}[hbt]
	\centering
    \includegraphics{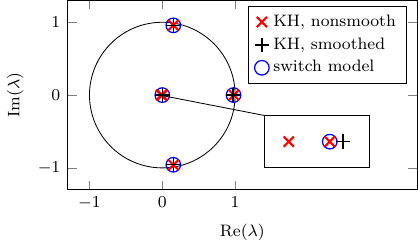}
    \caption{Floquet multipliers in Case A of the oscillator system, determined via the Koopman-Hill method (KH) with harmonic order $N_\mathrm{KH}=30$ for both the nonsmooth DAE and the smoothed ODE $(\alpha = 200)$; comparison to a highly accurate reference determined using the switch model.}
    \label{fig:ex:oscillator:uc}
\end{figure}

The three methods yield four visually coinciding eigenvalues which are plotted in Figure \ref{fig:ex:oscillator:uc}: a complex conjugate pair of Floquet multipliers $\lambda_{1,2}$, a Floquet multiplier $\lambda_3$ close to $+1$ and a 
multiplier $\lambda_4 \approx 0$.
The DAE-formulated Koopman-Hill method features an additional projection multiplier $\lambda_5=0$.
While $\lambda_5$ appears due to the constraint in the $5$-dimensional DAE-formulation, the multiplier $\lambda_4 \approx 0$ is also present for the smoothed system description and in the switch model reference, which are both based on $4$-dimensional ODE-formulations.
This additional multiplier $\lambda_4$ is a result of the nonsmooth frictional nature of the system: 
At the first slip-stick transition ($t \approx $ 0.07), uniqueness in backward time is lost and the dimension of the space $\cA(t)$ of admissible states reduces from $4$ to $3$, even in the ODE case~\cite{Leine2004}. 
This is particularly evident for the reference solution of the switch model, where the fundamental solution matrix is multiplied by a non-invertible saltation matrix at the slip-stick transition~\cite{Leine2004}. Indeed, while $\lambda_4$ is exactly zero for the switch model reference, it admits small but nonzero values of $-0.016$ and $0.005$ for the nonsmooth and regularized HBM solutions, respectively. The multiplier $\lambda_4$ is, therefore, also called a projection multiplier as it is responsible for reducing the subspace dimension from $4$ to $3$. 
In principle, this dimension reduction violates the assumption made in Section~\ref{sec:floquetDAE} about a constant solution space dimension.
However, if the system was directly initialized in a sticking phase, i.e., in a phase of DAE index 2, the subspace $\cA(t)$ would be of constant dimension~$3$ for all~$t\geq0$, suggesting that exactly three Floquet form solutions with corresponding Floquet multipliers exist and the methodology remains applicable.
While the following promising numerical results justify this interpretation and the application of the Koopman-Hill method for frictional systems, further work may be necessary to fully support the theory in the nonsmooth case.

The Floquet multipliers $\lambda_{1,2}$ and $\lambda_3$ produced by the parameters of Case A are located close to the unit circle.
The particular location of the Floquet multiplier $\lambda_3$ can be explained by considering the limit process $\mu \to \infty$ which introduces a bilateral constraint on velocity level between the second mass and the ground.
By locking this degree of freedom, the system description \eqref{eq:Examples:2DOFoscillatorDAE} turns into a DAE of constant index 2. In the limit process, the multiplier $\lambda_3$ converges to $+1$, which corresponds to a perturbation in $q_2$, that is, a perturbation that moves the bilateral constraint to a different position without allowing for any further movement of the second mass.
Additionally, the projection multiplier $\lambda_4$ now corresponds to a non-admissible perturbation in direction of $\dot{q_2}$. This pair of eigenvalues $(\lambda_3, \lambda_4) = (1,0)$ is typical for systems with bilateral constraints on velocity level.

\begin{figure}[hbt]
	\centering
    \includegraphics{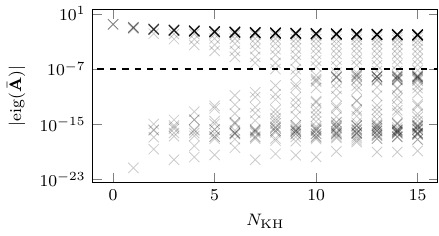}
    \caption{Eigenvalue distribution of $\bar{\vA}$ in Case A of the nonsmooth oscillator system, used to compute the Drazin inverse.}
    \label{fig:ex:oscillator:drazin}
\end{figure}

The computation of the Drazin inverse $\bar{\vA}^D$ for the frictional oscillator is numerically more challenging than for the pendulum system.
The eigenvalue distribution of the matrix $\bar{\vA}$ plotted in Figure~\ref{fig:ex:oscillator:drazin} does not show the same separation of eigenvalues discussed in the context of Figure \ref{fig:ex:pendulum:drazin}.
For small harmonic orders $N_\mathrm{KH}$, $\frac{4}{5}$ of the eigenvalues admit magnitudes above the chosen tolerance $\epsilon=10^{-7}$ and $\frac{1}{5}$ below. For increasing $N_\mathrm{KH}$ these two groups of eigenvalues above and below the threshold approach each other and eventually blend together without a clear distinction at $N_{\mathrm{KH}} \geq 9$. For large $N_{\mathrm{KH}}$, the ratio of eigenvalues above the tolerance is neither $\frac{4}{5}$, as expected for one projection multiplier, nor $\frac{3}{5}$, as expected for two projection multipliers.
Instead, the ratio approaches a value of approximately $75\%$ between these two bounds, indicating the collapse of one solution subspace dimension within the period.
This makes the decision about which eigenvalues of $\bar{\vA}$ to sort into the nilpotent block $\vN$ of \eqref{eq:KHDAE:preDrazin} a numerically badly conditioned problem.
Nonetheless, applying the Koopman-Hill method for these larger harmonic orders still produces comparatively accurate results as discussed in the following convergence analysis.

\begin{figure}[h!]
\centering
\captionsetup[subfigure]{justification=centering}
    \begin{subfigure}[t]{0.49\textwidth}
        \centering
        \includegraphics{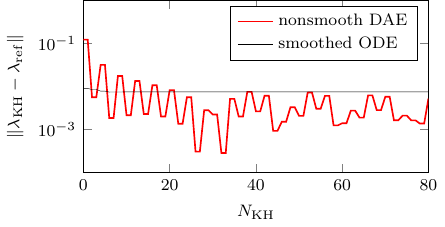}
        \caption{Complex conjugate pair of Floquet multipliers $\lambda_{1,2}$.}
        \label{fig:ex:oscillator:conv12}
    \end{subfigure}
    \hfill
    \vspace{5mm}
    \begin{subfigure}[t]{0.49\textwidth}
        \centering
        \includegraphics{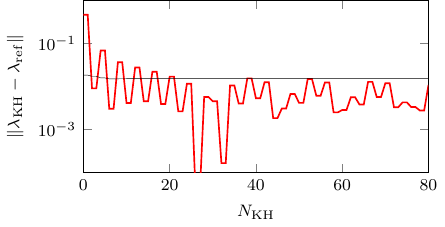}
        \caption{Real-valued Floquet multiplier $\lambda_3$.}
        \label{fig:ex:oscillator:conv3}
    \end{subfigure}
    \hfill
    \vspace{5mm}
    \begin{subfigure}[t]{0.49\textwidth}
        \centering
        \includegraphics{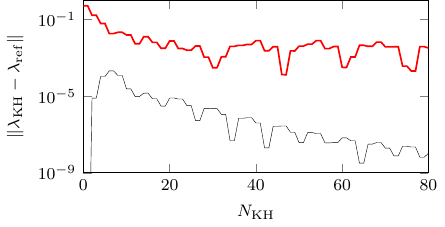}
        \caption{Projection multiplier $\lambda_4$}
        \label{fig:ex:oscillator:conv4}
    \end{subfigure}
    \caption{Convergence behavior of the Floquet and projection multipliers of Case A, determined via the Koopman-Hill method with different harmonic orders $N_\mathrm{KH}$, comparison to highly accurate reference determined using the switch model.}
    \label{fig:ex:oscillator:conv}
\end{figure}

Figure \ref{fig:ex:oscillator:conv} depicts the convergence behavior of the eigenvalues in Case A for different harmonic orders $N_\mathrm{KH}$ with respect to the reference eigenvalues determined with the switch model.
The results of the Koopman-Hill method differ both qualitatively and quantitatively when comparing the nonsmooth DAE-formulation and the smoothed ODE-formulation.
Generally, we cannot expect that the Floquet multipliers converge to the same numerical value due to the differences in the two system descriptions.
While the Floquet multipliers $\lambda_{1,2}$ and $\lambda_3$ in the regularized ODE case converge quickly to a fixed value when increasing $N_\mathrm{KH}$, the multipliers in the DAE case oscillate moderately even for large $N_\mathrm{KH}$.
The oscillations in Figures \ref{fig:ex:oscillator:conv12} and \ref{fig:ex:oscillator:conv3} are of slightly different order of magnitude but are qualitatively almost identical, i.e., both for the complex conjugate pair $\lambda_{1,2}$ and the real-valued multiplier $\lambda_3$, the DAE-formulated Koopman-Hill method gets close to the reference multipliers at the same harmonic orders $N_\mathrm{KH}$.
In particular, one can observe that the distance to the reference Floquet multipliers does not significantly decrease once a certain harmonic order $N_\mathrm{KH}$ is reached.
This effect is likely to be caused by the Drazin inversion procedure discussed in the context of Figure \ref{fig:ex:oscillator:drazin}, which is numerically badly conditioned for large harmonic orders.

While the Floquet multipliers $\lambda_{1,2}$ and $\lambda_3$ of the nonsmooth DAE-formulation match the reference slightly better than the corresponding eigenvalues of the smoothed ODE-formulation, the convergence behavior of the projection multiplier $\lambda_4$ shown in Figure~\ref{fig:ex:oscillator:conv4} is different: While the projection multiplier of the DAE shows oscillations in a similar order of magnitude as the Floquet multipliers, the projection multiplier of the ODE gradually approaches zero. The second projection multiplier $\lambda_5$ that only appears in the DAE-formulation is always identically zero and therefore not shown in Figure \ref{fig:ex:oscillator:conv}.

\begin{figure}[hbt]
	\centering
    \includegraphics{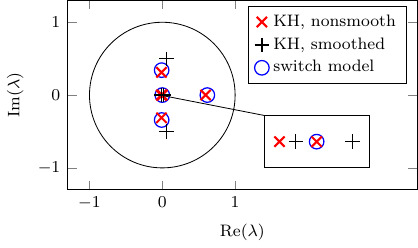}
    \caption{Floquet multipliers in Case B of the oscillator system,
determined via the Koopman-Hill method (KH) with harmonic order $N_\mathrm{KH} = 30$ for the nonsmooth
DAE and the smoothed ODE, comparison to comparison to highly accurate reference determined using the switch model.}
    \label{fig:ex:oscillator:uc_caseB}
\end{figure}

Case B generally exhibits qualitatively similar behavior as Case A. Since the analysis of the HBM solution in Case B is already covered in depth by the authors of \cite{Legrand2024}, the focus of the subsequent discussion lies on the results of the Koopman-Hill method.
Again, the eigenvalues obtained through Koopman-Hill-DAE (nonsmooth case) and Koopman-Hill-ODE (regularized case with~$\alpha=200$) are compared against a highly accurate reference obtained via the switch model.
The resulting Floquet and projection multipliers for the harmonic order $N_\mathrm{KH} = 30$ are shown in Figure \ref{fig:ex:oscillator:uc_caseB}. Similar to Case A, the switch model reference produces a complex conjugate pair of Floquet multipliers $\lambda_{1,2}$, a real-valued positive Floquet multiplier $\lambda_3$ and a projection multiplier $\lambda_4=0$. The Koopman-Hill method for the nonsmooth DAE captures these eigenvalues qualitatively and features an additional projection multiplier $\lambda_5=0$.
The eigenvalues of the regularized ODE, however, are qualitatively different.
Specifically, the real-valued Floquet multiplier $\lambda_3$ cannot be located, instead an additional multiplier close to zero appears. This is potentially due to the stiff nature of the $\mathrm{tanh}$-approximation.

\begin{figure}[h!]
\centering
\captionsetup[subfigure]{justification=centering}
    \begin{subfigure}[t]{0.49\textwidth}
        \centering
        \includegraphics{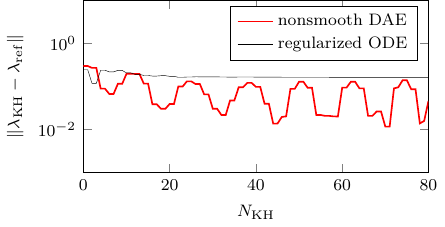}
        \caption{Complex conjugate pair of Floquet multipliers $\lambda_{1,2}$.}
        \label{fig:ex:oscillator:convB12}
    \end{subfigure}
    \hfill
    \vspace{5mm}
    \begin{subfigure}[t]{0.49\textwidth}
        \centering
        \includegraphics{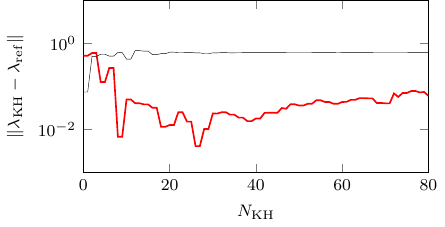}
        \caption{Real-valued Floquet multiplier $\lambda_3$.}
        \label{fig:ex:oscillator:convB3}
    \end{subfigure}
    \hfill
    \vspace{5mm}
    \begin{subfigure}[t]{0.49\textwidth}
        \centering
        \includegraphics{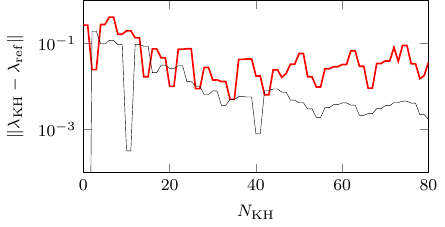}
        \caption{Projection multiplier $\lambda_4$.}
        \label{fig:ex:oscillator:convB4}
    \end{subfigure}
    \caption{Convergence behavior of the Floquet and projection multipliers in Case B, determined via the Koopman-Hill method with different harmonic orders $N_\mathrm{KH}$, comparison to highly accurate reference determined using the switch model.}
    \label{fig:ex:oscillator:convB}
\end{figure}

The convergence behavior of Case B shown in Figure \ref{fig:ex:oscillator:convB} is qualitatively very similar to the properties of Case A discussed in the context of Figure \ref{fig:ex:oscillator:conv}:
While the Floquet multipliers found for the nonsmooth DAE formulation (Figures \ref{fig:ex:oscillator:convB12} and \ref{fig:ex:oscillator:convB3}) oscillate moderately when varying the harmonic order $N_\mathrm{KH}$, the reference Floquet multipliers are generally better approximated than by the regularized ODE formulation.
As already indicated above, this becomes especially clear for the multiplier $\lambda_3$ which is not even qualitatively identified in the ODE case.
Figure \ref{fig:ex:oscillator:convB4} shows that the projection multiplier $\lambda_4$ for the smoothed ODE is, again, located closer to the origin than for the nonsmooth DAE.
While the comparison of Figures \ref{fig:ex:oscillator:conv} and \ref{fig:ex:oscillator:convB} indicates a qualitatively similar convergence behavior, the multipliers determined in Case A are quantitatively closer to the reference than in Case B.
This phenomenon can be explained by the different excitation frequencies
of the two cases.
As discussed in depth by the authors of \cite{Legrand2024}, the dynamics of the frictional oscillator becomes richer if the excitation frequency is close to a resonance frequency of the system, which applies in Case B.

Another potential stability method for DAEs is the determination of the Floquet exponents from the truncated generalized Hill eigenvalue problem~\eqref{eq:KHDAE:truncatedGeneralizedHill} with subsequent sorting, a method that is often applied in ODE contexts.
Figure \ref{fig:ex:oscillator:Hill} shows the spectrum of the generalized Hill eigenvalue problem \eqref{eq:KHDAE:generalizedHill} for Case B of the frictional oscillator ($+$), together with the switch model (\textcolor{blue}{$\circ$}) and the exponents determined from the Floquet multipliers obtained through nonsmooth Koopman-Hill using Equation~\eqref{eq:KHDAE:MultExp} (\textcolor{red}{$\times$}).
Due to the nonsmooth character of the system description, the eigenvalue distribution of the generalized Hill eigenvalue problem is fragmented and does not feature any visually identifiable exponent groups of equal real part, which would be expected in the analysis of smooth dynamics.
As a consequence, the task of distinguishing the ``correct'' Floquet exponents from the vast majority of spurious eigenvalues is practically impossible.
In particular, the eigenvalues inside the fundamental strip , i.e. $- \frac{\omega}{2} < \mathrm{Im}(\alpha) \leq \frac{\omega}{2}$ are not located near the Floquet exponents produced by the switch model.
While the truncated Hill spectrum features eigenvalues located approximately at $\alpha_\mathrm{ref} + \ic k \omega$ for some $k\in\Zspace$, there is no sorting criterion that distinguishes them from the other spurious eigenvalues without prior knowledge of the true solution.

\begin{figure}[hbt]
	\centering
    \includegraphics{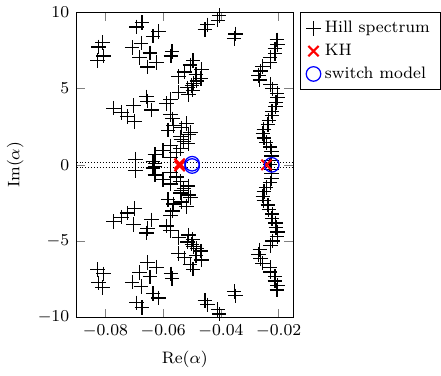}
    \caption{Fragmented Hill spectrum in Case B of the nonsmooth oscillator system, comparison to Koopman-Hill method (KH) and switch model reference, $N_\mathrm{KH}=30$.}
    \label{fig:ex:oscillator:Hill}
\end{figure}

In summary, the Koopman-Hill method for DAEs qualitatively captures the Floquet multipliers of periodic solutions of the frictional oscillator, while its quantitative accuracy is hindered by the Drazin inverse operation executed on the numerically badly conditioned matrix $\bar{\vA}$. Still, the Koopman-Hill method offers a stability analysis conjoined with the nonsmooth HBM-DAE problem, even in situations where the classical Hill eigenvalue method can not be applied due to a highly fragmented, spurious spectrum.

\section{Conclusions}\label{ch:Conclusion}
In this paper, we presented a novel method for the stability analysis of periodic solutions in dynamical systems described by differential algebraic equations.
For this purpose, the Hill eigenvalue problem and the Koopman-Hill framework were generalized to DAEs.
The main ideas of the ODE case remain the same, but crucially, the lifted system becomes a linear time-invariant DAE which necessitates a more sophisticated solution procedure involving a Drazin inverse. The method is, thus, limited by the solvability conditions of linear time-invariant DAEs. In particular, while many algorithms for Drazin inverse computations exist \cite{Campbell2020}, the ill-conditioned problem that may result from the Koopman lift in nonsmooth applications remains numerically challenging.

The proposed Koopman-Hill method is agnostic to the DAE's index. In particular, in contrast to direct time integration of DAEs, no index reduction is necessary in the case of higher-index DAEs. The monodromy matrix, which is generally singular in the DAE case, features nonzero Floquet multipliers but also eigenvalues equal to zero, for which we propose the term ``projection multipliers'' since they do not give rise to a corresponding Floquet form solution, but instead indicate a loss of dimension in the solution space. 

The method is directly applicable to mechanical multibody systems with both smooth and frictional constraints. In the frictional case, however, the solution space reduces in dimension during the first slip-to-stick transition, which leads to an additional projection multiplier. While the numerical results are promising, further research is required to address the challenging question of how the assumption of a constant-dimensional subspace $\cA(t)$ in Chapter \ref{ch:KHDAE} can be suitably generalized to frictional dynamics with loss of uniqueness in backward time.  A detailed answer to this question requires a yet to be conducted in-depth analysis of the solution properties of nonsmooth LTP DAEs with collapsing solution space dimension and their corresponding lifted LTI DAEs with constant solution space dimension, especially with regard to the structure and number of Floquet form solutions. 

By directly computing the monodromy matrix from the generalized Hill matrix, the Koopman-Hill method avoids sorting requirements which are encountered in both the ODE and DAE case for the classical and generalized Hill eigenvalue problem, respectively.
This is particularly useful in the analysis of nonsmooth dynamics, where the Hill matrix typically exhibits an extremely fragmented, spurious spectrum and the number of Floquet exponents may not be obvious a priori.

For the purpose of validation, the Koopman-Hill method for DAEs was applied to a smooth constrained forced pendulum of index 3 and a nonsmooth frictional oscillator of changing index.
Compared to a regularized ODE variant, the nonsmooth formulation better captures the frictional character of the system but has to deal with numerical inaccuracies due to finite truncation of Fourier series which favors the Gibbs phenomenon, an effect that cannot be reduced by increasing the harmonic order of the method.
Our analysis of case B shows that the nonsmooth formulation which is enabled by the Koopman-Hill method can reproduce nonsmooth effects, which are not captured qualitatively by the regularized formulation.

As the first study to generalize the Koopman-Hill method to DAEs, this work was necessarily exploratory in nature, and several theoretical aspects of the method warrant more rigorous treatment in future investigations. In particular, the assumption of an $r$-dimensional solution space can be difficult to justify a priori for a given DAE. Establishing a connection between the index of the lifted matrix pencil and the dimension of the original solution space would both strengthen the theoretical foundations of the method and enhance the reliability of the Drazin inversion procedure. Furthermore, the scope of the present formulation remains limited to systems with constant pseudo-mass matrices, and its extension to the non-constant case $\vA(t,\vx)$ represents a natural and practically significant direction for subsequent research. Additionally, mechanical multibody systems involving spatial Coulomb friction or rigid bodies parameterized by quaternions can generally be described by DAEs and their detailed study using the Koopman-Hill framework remains open for a more detailed future analysis.

\section*{Author contributions, funding disclosure and declaration of competing interest}
No funding was received for conducting this study. The authors have no relevant financial or non-financial interests to disclose. The datasets and code generated and analyzed during the current study are available from the corresponding author on reasonable request. A.S. wrote the initial draft, implemented the software and created the visualizations. A.S. and F.B. performed the formal analysis. All authors contributed to the methodology and reviewed and edited the manuscript. 

\appendix
\section{Floquet form solutions of linear time-periodic DAEs} \label{sec:floquetform}
This section derives Floquet's theorem and the Floquet form solutions for LTP DAEs. All arguments are analogous to the well-known ODE case~\cite{Nayfeh1995}, but care is taken to apply the group property only in forward direction as the fundamental solution matrix is generally not invertible. For the sake of brevity, we assume here $t_0 = 0$, but all arguments hold analogously for arbitrary $t_0 \in \Rspace$.
First, we show Floquet's theorem in the DAE case:

\begin{lemma}[Floquet's theorem for DAEs]\label{eq:Floquetproof:Floquettheorem}
    Consider the linear time-periodic DAE~\eqref{eq:KHDAE:LTPDAE} with constant solution space dimension $r$ and its fundamental solution matrix given in Equation~\eqref{eq:dae:fundamat}. There exists a $T$-periodic matrix $\hat{\vP}(t) \in \Cspace^{r \times r}$ and a constant matrix $\vQ \in \Cspace^{r \times r}$ such that
    \begin{align}
        \vPs(t) \vPs(0)^{-1} = \hat{\vP}(t) \ex^{\vQ t} \;.
    \end{align}
\end{lemma}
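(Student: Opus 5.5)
The plan is to mimic the classical ODE proof of Floquet's theorem, working entirely with the reduced $r\times r$ matrix $\vPs(t)$. Since $\vPs(t)$ is invertible for every $t$, none of the difficulties caused by the singularity of $\vPh(t,t_0)$ enter the argument.

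The key ingredient is a periodicity relation for $\vPs$. Because $\vJ$ is $T$-periodic and the basis matrix $\vV(t)$ was chosen $T$-periodic, the shifted columns $\vvph_k(t+T)$, $k=1,\dots,r$, again solve \eqref{eq:KHDAE:LTPDAE} for $t\ge 0$. Indeed, substituting $s=t+T$ and using $\vJ(s)=\vJ(s-T)$ shows this. These shifted functions are also linearly independent. The $r$-dimensional solution space is spanned by $\vvph_1,\dots,\vvph_r$, and by forward uniqueness a solution on $[0,\infty)$ is determined by its value at time $0$. Hence there is a constant invertible matrix $\vC\in\Rspace^{r\times r}$ with
\begin{align*}
[\vvph_1(t+T),\dots,\vvph_r(t+T)] = [\vvph_1(t),\dots,\vvph_r(t)]\,\vC \quad\text{for all } t\ge 0.
\end{align*}
Substituting \eqref{eq:dae:fundasols} and $\vV_r(t+T)=\vV_r(t)$, and using that $\vV_r(t)$ has full column rank, gives $\vPs(t+T)=\vPs(t)\vC$. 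Evaluating at $t=0$ identifies $\vC=\vPs(0)^{-1}\vPs(T)$.

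Next I take a matrix logarithm. Since $\vC$ is invertible, there exists a (possibly complex) $\vQ\in\Cspace^{r\times r}$ with $\ex^{\vQ T}=\vPs(0)\,\vC\,\vPs(0)^{-1}$; this is the standard existence result for logarithms of nonsingular matrices, obtained via the Jordan form. That is why the statement allows complex matrices. I then define
\begin{align*}
\hat{\vP}(t):=\vPs(t)\vPs(0)^{-1}\ex^{-\vQ t}.
\end{align*}
By construction $\vPs(t)\vPs(0)^{-1}=\hat{\vP}(t)\ex^{\vQ t}$, so only periodicity remains to be checked. This is a direct computation:
\begin{align*}
\hat{\vP}(t+T)&=\vPs(t)\vC\vPs(0)^{-1}\ex^{-\vQ T}\ex^{-\vQ t}\\
&=\vPs(t)\vPs(0)^{-1}\bigl(\vPs(0)\vC\vPs(0)^{-1}\bigr)\ex^{-\vQ T}\ex^{-\vQ t}=\hat{\vP}(t),
\end{align*}
since the bracketed factor equals $\ex^{\vQ T}$. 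Periodicity holds for $t\ge0$, and $\hat{\vP}$ can be extended periodically to all of $\Rspace$ if needed.

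The main obstacle is the first step: justifying that the time-shifted solutions span the same space and that $\vC$ is constant and invertible. Here only forward uniqueness and the constant dimension $r$ of $\cA(t)$ are available, not a group property backwards in time. I would argue directly: the shifted family consists of $r$ solutions; at time $0$ its values $\vV_r(0)\vPs(T)$ are linearly independent because $\vPs(T)$ is invertible; so it forms another basis of the $r$-dimensional solution space. The change of basis between two bases is a constant invertible matrix, where constancy follows from forward uniqueness applied column by column. Everything after that is the classical ODE argument.
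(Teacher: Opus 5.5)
Your proof is correct and follows the paper's argument. Both establish $\vPs(t+T)=\vPs(t)\vPs(0)^{-1}\vPs(T)$, take the logarithm $\ex^{\vQ T}=\vPs(T)\vPs(0)^{-1}$ (your $\vPs(0)\vC\vPs(0)^{-1}$ is the same matrix), define the identical $\hat{\vP}(t)=\vPs(t)\vPs(0)^{-1}\ex^{-\vQ t}$, and check periodicity directly. The only difference is how the shift relation is justified: you use a change of basis between the shifted and original solution families, based on forward uniqueness and the spanning assumption, whereas the paper simply asserts the $T$-shift property $\vPh(t,0)=\vPh(t+T,T)$ and the forward group property of the singular $n\times n$ matrix $\vPh$ and then removes the $\vV$ factors and zero blocks, so your version is slightly more self-contained.
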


\begin{proof}
    Due to the periodicity of the DAE~\eqref{eq:KHDAE:LTPDAE}, the fundamental solution matrix has a $T$-shift property in both arguments simultaneously, i.e., 
    \begin{align}\label{eq:Floquetproof:periodicity}
        \vPh(t, 0) = \vPh(t+T, T), \quad t \geq 0 \;.
    \end{align}
    As discussed above, the group property of the fundamental solution matrix holds in forward time, i.e., 
    \begin{align}\label{eq:Floquetproof:group1}
        \vPh(t + T, 0) = \vPh(t + T, T) \vPh(T, 0), \quad t \geq 0 \;.
    \end{align}
    Combining \eqref{eq:Floquetproof:periodicity} and \eqref{eq:Floquetproof:group1} yields
    \begin{align}\label{eq:FloquetProof:group}
        \vPh(t + T, 0) =  \vPh(t, 0) \mo \;.
    \end{align} 
    Since the matrix $\vPs(t) \vPs(0)^{-1}$ is invertible for all $t \geq 0$, there exists a matrix $\vQ \in \Cspace^{r \times r}$ such that 
    \begin{align}\label{eq:FloquetProof:matrixlog}
        \ex^{\vQ T} = \vPs(T) \vPs(0)^{-1}\;.
    \end{align}
The matrix $\hat{\vP}(t) := \vPs(t) \vPs(0)^{-1} \ex^{-\vQ t}$ is then defined appropriately such that
    \begin{align}\label{eq:FloquetProof:PhiP}
        \vPh(t,0) = \vV(t) \begin{pmatrix}
            \hat{\vP}(t) \ex^{\vQ t} & \vzero\\ \vzero & \vzero
        \end{pmatrix} \vV(0)^{-1}
    \end{align}
    holds. Clearly, $\hat{\vP}(T)$ is the identity matrix. It remains to show that $\hat{\vP}(t)$ is indeed $T$-periodic. To this end, Equation~\eqref{eq:FloquetProof:PhiP} is evaluated at time $t+T$ as
    \begin{align}\label{eq:FloquetProof:PhitplusT}
        \vPh(t + T,0) = \vV(t) \begin{pmatrix}
            \hat{\vP}(t + T) \ex^{\vQ (t+T)}& \vzero\\ \vzero & \vzero
        \end{pmatrix} \vV(0)^{-1}\;.
    \end{align}
    On the other hand, substituting the definitions of $\hat{\vP}(t)$ and $\vQ$ into Equation~\eqref{eq:FloquetProof:group} yields
    \begin{align}\label{eq:FloquetProof:PhitplusT2}
        &\vPh(t + T, 0) =  \vPh(t, 0) \mo \nonumber\\
        =& \vV(t) \begin{pmatrix}
            \hat{\vP}(t) \ex^{\vQ t} & \vzero\\ \vzero & \vzero
        \end{pmatrix} \begin{pmatrix}
            \hat{\vP}(T)\ex^{\vQ T} & \vzero\\ \vzero & \vzero
        \end{pmatrix} \vV(0)^{-1}
    \end{align}
    by making use of the property $\vV(0)^{-1}\vV(T) =\vI$ due to the $T$-periodicity of the matrix $\vV(t)$.
    
    By equating the expressions \eqref{eq:FloquetProof:PhitplusT} and \eqref{eq:FloquetProof:PhitplusT2}, acknowledging the property $\hat{\vP}(T)=\vI$, pre-multiplying with $\vV(t)^{-1}$, and post-multiplying with $\vV(0)$, we obtain
\begin{align}
        \begin{pmatrix}
            \hat{\vP}(t + T) \ex^{\vQ (t+T)} & \vzero\\ \vzero & \vzero
        \end{pmatrix} = \begin{pmatrix}
            \hat{\vP}(t) \ex^{\vQ (t+T)} & \vzero\\ \vzero & \vzero
        \end{pmatrix} \;.
    \end{align}
    With $\ex^{\vQ (t+T)}$ invertible, this directly implies the desired periodicity property of the matrix $\hat{\vP}(t)$.
\end{proof}

In the remainder of this section, we construct the Floquet form solutions. 
\begin{lemma}\label{lem:floquetform}
    Let $\alpha_l$ be an eigenvalue of the matrix $\vQ$ with corresponding eigenvector $\vxi_l$. Then there exists a $T$-periodic vector $\vp_l(t)$ such that the Floquet form solution
    \begin{align}
        \vy_l(t) = \vp_l(t) \ex^{\alpha_l t}
    \end{align}
    is an admissible solution of the DAE~\eqref{eq:KHDAE:LTPDAE} with initial condition
    \begin{align}
        \vy_l(0) = \vV(0) \begin{pmatrix}
            \vxi_l\\ \vzero
        \end{pmatrix}\;\textnormal{.}
    \end{align} 
\end{lemma}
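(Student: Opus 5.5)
The plan is to build the solution directly from the fundamental solution matrix \eqref{eq:dae:fundamat}, using Floquet's theorem for DAEs (Lemma~\ref{eq:Floquetproof:Floquettheorem}), and then to read off the periodic factor.

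\textbf{Step 1: define the candidate.} With $t_0=0$, set
\begin{align*}
\vy_l(t) := \vPh(t,0)\,\vV(0)\begin{pmatrix}\vxi_l\\ \vzero\end{pmatrix}.
\end{align*}
By the fundamental solution property, $\vy_l$ is a solution of \eqref{eq:KHDAE:LTPDAE}. It lies in $\cA(t)$ for all $t\ge 0$, since it is $\vV_r(t)\vPs(t)\vPs(0)^{-1}\vxi_l$. Evaluating at $t=0$, where $\vPh(0,0)=\vP(0)$ acts as the identity on $\cA(0)$, gives the stated initial condition $\vy_l(0)=\vV(0)(\vxi_l^\T,\vzero^\T)^\T$. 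To see that $\vy_l$ is a genuine admissible solution, I will write it as $\vV_r(t)\vPs(t)\vxi$ with $\vxi=\vPs(0)^{-1}\vxi_l$. This is one of the linear combinations of $\vvph_1,\dots,\vvph_r$ from \eqref{eq:dae:fundasols}.

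\textbf{Step 2: extract the Floquet form.} By Lemma~\ref{eq:Floquetproof:Floquettheorem}, $\vPs(t)\vPs(0)^{-1}=\hat{\vP}(t)\ex^{\vQ t}$. Since $\vQ\vxi_l=\alpha_l\vxi_l$, we have $\ex^{\vQ t}\vxi_l=\ex^{\alpha_l t}\vxi_l$, and hence
\begin{align*}
\vy_l(t)=\vV_r(t)\hat{\vP}(t)\vxi_l\,\ex^{\alpha_l t}.
\end{align*}
This suggests defining $\vp_l(t):=\vV_r(t)\hat{\vP}(t)\vxi_l$. Its $T$-periodicity follows from the periodicity of $\vV(t)$, which was chosen periodic in Section~\ref{sec:floquetDAE}, together with the periodicity of $\hat{\vP}(t)$ from the lemma. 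Continuity and boundedness of $\vp_l$ follow from continuity of $\vV$ and $\vPs$ on a compact period interval.

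\textbf{Step 3: consistency with the multipliers.} As a check, $\vy_l(T)=\ex^{\alpha_l T}\vy_l(0)$, since $\hat{\vP}(T)=\vI$. Thus $\vy_l(0)$ is an eigenvector of $\mo$ with eigenvalue $\ex^{\alpha_l T}$, which is consistent with \eqref{eq:KHDAE:MultExp}.

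The main obstacle is Step 1: rigorously justifying that $\vPh(t,0)$ applied to this vector is an admissible solution for all $t\ge0$ while using the group property only in forward time. The argument must not rely on any inverse of $\vPh$. I would handle this by working entirely in the $r$-dimensional coordinates $\vPs$, which are invertible, rather than with $\vPh$ itself.
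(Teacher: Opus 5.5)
Your proposal is correct and matches the paper's proof. The paper applies $\vPh(t,0)$ in the Floquet-factored form \eqref{eq:FloquetProof:PhiP} to the initial condition $\vV(0)(\vxi_l\T,\vzero\T)\T$, uses $\ex^{\vQ t}\vxi_l=\ex^{\alpha_l t}\vxi_l$, and reads off the periodic factor as $\vp_l(t)=\vV(t)(\,(\hat{\vP}(t)\vxi_l)\T,\vzero\T)\T$, which equals your $\vV_r(t)\hat{\vP}(t)\vxi_l$.

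Your explicit checks of admissibility, the initial condition and periodicity fill in details the paper leaves implicit. The forward-time group-property issue you flag as the main obstacle does not arise in this lemma: writing $\vy_l(t)=\vV_r(t)\vPs(t)\vPs(0)^{-1}\vxi_l$, as you already do, settles admissibility directly.
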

\begin{proof}
    Since $\vxi_l$ is an eigenvalue of $\vB$ with eigenvalue $\alpha_l$, it holds that $\ex^{\vQ t} \vxi_l = \ex^{\alpha_l t} \vxi_l$. This can be seen easily from the power series expansion
    \begin{align}
        \sum_{k=0}^\infty \frac{t^k}{k!} \vQ^k \vxi_l = \sum_{k=0}^\infty \frac{t^k}{k!} \alpha_l^k \vxi_l = \ex^{\alpha_l t} \vxi_l
    \end{align}
    of the matrix exponential.
    Using this property and the fundamental solution matrix for DAEs~\eqref{eq:FloquetProof:PhiP}, the solution $\vy_l(t)$ emanating from the initial condition $\vy_l(0)$ is 
    \begin{align}
        \vy_l(t) &=  \vV(t) \begin{pmatrix}
            \hat{\vP}(t) \ex^{\vQ t} \vxi_l \\ \vzero 
        \end{pmatrix} \nonumber\\
        &=
        \vV(t) \begin{pmatrix}
            \hat{\vP}(t) \vxi_l \\\vzero
        \end{pmatrix} \ex^{\alpha_l t} \;.
    \end{align}
    Thus, the periodic vector $\vp_l(t)$ is given by 
    \begin{align}
        \vp_l(t) = \vV(t) \begin{pmatrix}
            \hat{\vP}(t) \vxi_l \\ \vzero
        \end{pmatrix} \;.
    \end{align}
\end{proof}
When $\vQ$ has $r$ distinct eigenvalues, the solution space is spanned by the $r$ corresponding linearly independent Floquet form solutions through Lemma~\ref{lem:floquetform}. 


\input{main.bbl}
\end{document}

%% file: header.tex
\usepackage[T1]{fontenc}
\usepackage[english]{babel}
\usepackage{lmodern,textcomp,latexsym}
\usepackage{amsthm,amssymb,amsmath,amsfonts,dsfont,mathrsfs,bbm,bm,mathtools}
\usepackage{cite}
\usepackage{color}
\usepackage{booktabs}
\usepackage[footnotesize]{caption}
\usepackage{subcaption}
\usepackage{pgfplots} 
\pgfplotsset{
        compat=newest,
        my axis style/.style={
            xlabel style={
                font=\footnotesize,
            },
            ylabel style={
                font=\footnotesize,
            },
            legend style={
                font=\footnotesize,
            },
            ticklabel style={
                font=\footnotesize,
            },
            every axis plot/.append style={
                line width=1pt,
            },
            legend pos=north west,
            legend columns = 3,
            legend cell align = left,
        },
    }
\usetikzlibrary{calc,positioning}
\newlength\fheight
\newlength\fwidth
\newlength\fheightwide
\newlength\fwidthwide
\newlength\fheightthree
\newlength\fwidththree
\usetikzlibrary{fit}

\usetikzlibrary{external}
\usepackage[title]{appendix}
\usepackage{enumitem}

\theoremstyle{plain}
\newtheorem{theorem}{Theorem}
\newtheorem{lemma}[theorem]{Lemma}

\theoremstyle{definition}

\theoremstyle{remark}

\usepackage[hidelinks]{hyperref}

\newcommand{\diff}[1][]{\mathrm{d}#1}

\newcommand{\pd}[2]{\frac{\partial #1}{\partial #2}}

\newcommand{\td}[2]{\frac{\diff #1}{\diff #2}}

\newcommand{\Sgn}{{\mathop{\mathrm{Sgn}}}}

\newcommand{\mo}{\vPh_T}
\newcommand{\moN}{\vPh_{TN}}

\newcommand{\dy}{\dot{\vy}}

\newcommand{\T}{^{\mathop{\mathrm{T}}}}

\newcommand{\diag}{{\mathop{\mathrm{diag}}}}

\newcommand{\iddots}{{}_{\boldsymbol{\cdot} \vphantom{\frac{1}{1}}} \cdot^{\boldsymbol{\cdot} \vphantom{\frac{1}{1}}}} 

\newcommand{\ic}{\ri} 
\newcommand{\ex}{\re} 

\newcommand{\vzero}{\mathbf{0}}

\newcommand{\Rspace}{\mathbb{R}}
\newcommand{\Cspace}{\mathbb{C}}

\newcommand{\Zspace}{\mathbb{Z}}

\newcommand{\rc}{\mathrm c}

\newcommand{\re}{\mathrm e}

\newcommand{\ri}{\mathrm i}

\newcommand{\rp}{\mathrm p}

\newcommand{\rs}{\mathrm s}

\newcommand{\vla}{\bm{\lambda}}

\newcommand{\vxi}{\bm{\xi}}

\newcommand{\vch}{\bm{\chi}}

\newcommand{\vvph}{\bm{\phi}}

\newcommand{\vPh}{\mathbf \Phi}
\newcommand{\vPs}{\mathbf \Psi}

\newcommand{\ve}{\mathbf e}
\newcommand{\vf}{\mathbf f}
\newcommand{\vg}{\mathbf g}
\newcommand{\vh}{\mathbf h}

\newcommand{\vp}{\mathbf p}
\newcommand{\vq}{\mathbf q}
\newcommand{\vr}{\mathbf r}

\newcommand{\vv}{\mathbf v}

\newcommand{\vx}{\mathbf x}
\newcommand{\vy}{\mathbf y}
\newcommand{\vz}{\mathbf z}

\newcommand{\vA}{\mathbf A}
\newcommand{\vB}{\mathbf B}
\newcommand{\vC}{\mathbf C}
\newcommand{\vD}{\mathbf D}
\newcommand{\vE}{\mathbf E}

\newcommand{\vH}{\mathbf H}
\newcommand{\vI}{\mathbf I}
\newcommand{\vJ}{\mathbf J}

\newcommand{\vM}{\mathbf M}
\newcommand{\vN}{\mathbf N}

\newcommand{\vP}{\mathbf P}
\newcommand{\vQ}{\mathbf Q}
\newcommand{\vR}{\mathbf R}

\newcommand{\vT}{\mathbf T}

\newcommand{\vV}{\mathbf V}
\newcommand{\vW}{\mathbf W}
\newcommand{\vX}{\mathbf X}

\newcommand{\cA}{\mathcal{A}}

\newcommand{\cF}{\mathcal{F}}

\newcommand{\cL}{\mathcal{L}}

\newcommand{\cO}{\mathcal{O}}

